\def\csname opt@stmaryrd.sty\endcsname
\newcommand{\Q}{\mathbb{Q}}
\newcommand{\R}{\mathbb{R}}
\newcommand{\C}{\mathbb{C}}
\newcommand{\Z}{\mathbb{Z}}
\DeclareFontFamily{U}{wncy}{}
    \DeclareFontShape{U}{wncy}{m}{n}{<->wncyr10}{}
    \DeclareSymbolFont{mcy}{U}{wncy}{m}{n}
    \DeclareMathSymbol{\Sh}{\mathord}{mcy}{"58} 
\newtheorem{theorem}{Theorem}[section]
\newtheorem{lemma}[theorem]{Lemma}
\newtheorem{proposition}[theorem]{Proposition}
\newtheorem{corollary}[theorem]{Corollary}
\theoremstyle{definition}
\newtheorem{definition}[theorem]{Definition}
\newtheorem{example}[theorem]{Example}
\newtheorem{Conjecture}[theorem]{Conjecture}
\newtheorem{remark}[theorem]{Remark}
\title{Replacement dynamics of binary quadratic forms}
\subjclass[2020]{Primary 37P35, Secondary 37P05, 11D09, 11G30}
\begin{document}

\author{\sc Raghav Bhutani}
\address{Raghav Bhutani \\
University of Illinois Chicago\\
USA}
\urladdr{}
\email{raghavbhutani17@gmail.com}

\author{\sc Frederick Saia}
\address{Frederick Saia \\
University of Illinois Chicago\\
USA}
\urladdr{https://fsaia.github.io/site/}
\email{freddy.v.saia@gmail.com}

\begin{abstract}
For an $S$-valued function $f$ of $m \geq 1$ variables we consider the dynamical process in which the output $f(\overline{v})$ replaces exactly one entry of the input $\overline{v} \in S^m$ at each step. This can be viewed as a special case of multivariate polynomial semigroup dynamics, and our study focuses on periodic vectors with respect to this process. We define a stratification of periodic vectors according to their type, and characterize types for which the determination of periodic vectors comes down to dynamics of univariate polynomials. We then restrict to the case of a diagonal binary quadratic form $f$ over $\mathbb{Q}$, and classify rational periodic vectors for all types of period up to $5$. This includes two types which do not arise from the univariate case. 
\end{abstract}

\maketitle

\section{Introduction}

\subsection{Motivation: the univariate case}

Let $f \in \Q[x]$ be a univariate polynomial function with rational coefficients. The \textbf{rational periodic points} of $f$ of period $N \in \Z$ are those $x \in \Q$ satisfying
\[ x = f^{(N)}(x) := (f \circ \cdots \circ f)(x) \]
and not satisfying $x = f^{(m)}(x)$ for any $1 \leq m < N$. These points are central objects of study in the field of arithmetic dynamics, which studies the behavior of such polynomials under iteration. 

A particular example of interest is that in which $f$ is a degree $2$ polynomial. Here, consideration can be reduced, via a linear change of variables, to the periodic points of $f(x) = x^2+c$ for $c \in \Q$. Despite $2$ being the first non-trivial degree to study, one already encounters difficult and interesting questions; this case has seen considerable focus in the last few decades, beginning with work of Morton \cite{Mor1,Mor2}. In the first of these works, Morton handles the period $N=3$ case (with the $N=1,2$ cases being easier and previously handled). In the latter, Morton shows the non-existence of rational period $4$ points for quadratic polynomials \cite[Thm. 4]{Mor2}. (Some of Morton's work has overlap with an article of Walde--Russo \cite{WR}, to which we also refer the reader.) 

In \cite[Thm. 1]{FPS}, Flynn--Poonen--Schaefer prove that quadratic polynomials also have no rational periodic points of order $5$. Poonen \cite[Conj. 2]{Poonen} has conjectured that this is the case for all $N \geq 4$: 

\begin{Conjecture}[Poonen]\label{Poonen_conj}
If $N \geq 4$, then there is no quadratic polynomial $f(x) \in \Q[x]$ with a rational point of period $N$. 
\end{Conjecture}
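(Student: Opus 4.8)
This is an open conjecture; what follows is the program along which the cases $N = 4$ and $N = 5$ cited above were established, together with an honest account of where it stalls. The plan, for each fixed $N \geq 4$, is to study the associated \emph{dynatomic modular curve}. After the linear conjugation reducing a quadratic polynomial to $f_c(x) = x^2 + c$, form the $N$-th dynatomic polynomial $\Phi_N(x,c) \in \Z[x,c]$, the factor of $f_c^{(N)}(x) - x$ cutting out points of exact period $N$, and let $Y_1(N) \subset \A^2$ be the affine curve $\Phi_N(x,c) = 0$, with smooth projective model $X_1(N)$. The substitution $\sigma \colon (c,x) \mapsto (c, f_c(x))$ is an automorphism of order $N$ that permutes each $N$-cycle cyclically and acts freely away from an explicit bad locus (coincidences of periodic points, $c = \infty$); set $Y_0(N) = Y_1(N)/\langle \sigma \rangle$, with model $X_0(N)$. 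A rational point of period $N$ for some $f_c$ is exactly a good $\Q$-point of $Y_1(N)$, so the conjecture is the assertion that $X_1(N)(\Q)$ is supported on the bad locus for every $N \geq 4$.

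The first reduction is to pass from $X_1(N)$ to the quotient $X_0(N)$, whose genus is smaller by roughly a factor of $N$. A $\Q$-point of $X_1(N)$ maps to a $\Q$-point of $X_0(N)$, and conversely the fiber of the \'etale $\Z/N\Z$-cover $X_1(N) \to X_0(N)$ over a good $\Q$-point is a $\Z/N\Z$-torsor over $\Q$, i.e.\ an explicit degree-$N$ \'etale $\Q$-algebra, which carries a $\Q$-point precisely when that algebra has $\Q$ as a direct factor. So the plan becomes: (i) determine the finite set $X_0(N)(\Q)$; (ii) for each of its points write down the associated degree-$N$ torsor and check whether it is split; (iii) conclude. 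Step (ii) is a finite computation; step (i) is the substantive one.

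For step (i) the main tool is Chabauty--Coleman combined with a Mordell--Weil sieve. One computes the genus $g_N$ of $X_0(N)$ and bounds the Mordell--Weil rank $r_N$ of $\Jac(X_0(N))(\Q)$; when $r_N < g_N$ one picks a prime $p$ of good reduction, produces at least $g_N - r_N$ independent regular differentials killed by the image of $\Jac(X_0(N))(\Q)$ under $p$-adic integration, bounds $\# X_0(N)(\Q)$ residue-disk by residue-disk, and finishes with the sieve. This is essentially how $N = 4$ was handled — where one can already work with the genus-$2$ dynatomic curve $X_1(4)$ — and how Flynn--Poonen--Schaefer treated $N = 5$ via the genus-$2$ quotient $X_0(5)$; the genus $g_N$ then grows exponentially in $N$, and already for $N = 6$ the rank bound is only conditional on finiteness of $\Sh$, so Stoll's resolution of that case is correspondingly conditional.

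The main obstacle, and the reason the statement remains a conjecture, is uniformity in $N$. Chabauty--Coleman needs $r_N < g_N$, and nothing known forces the Mordell--Weil rank of these Jacobians to stay below the exponentially growing genus; the rank is not even effectively computable in general, and for large $N$ the inequality may simply fail, forcing one into the much heavier quadratic (non-abelian) Chabauty machinery and, on top of it, an $N$-independent bound on $\# X_0(N)(\Q)$. A single argument valid for all $N \geq 6$ would amount to a dynamical uniform boundedness theorem in the spirit of the Morton--Silverman conjecture, which is itself open; absent such structural input — a common low-genus quotient, or a uniformity theorem for dynatomic modular curves — the realistic proposal is the case-by-case recipe above, which currently reaches $N = 5$ unconditionally and $N = 6$ conditionally.
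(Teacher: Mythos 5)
This statement is an open conjecture (Poonen's Conjecture), which the paper states without proof and only cites; you were right not to attempt a proof, and your survey of the known cases --- $N=4$ via Morton, $N=5$ via Flynn--Poonen--Schaefer on the genus-$2$ quotient, $N=6$ conditionally via Stoll, with the obstruction to uniformity being the lack of control of Mordell--Weil rank against the exponentially growing genus of the dynatomic curves --- accurately matches the literature the paper relies on. No gap to report, since there is no proof to compare against.
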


Hutz--Ingram provided computational evidence towards \cref{Poonen_conj}, proving that the claim holds for all $f(x) = x^2+c$ with $c \in \Q$ of height up to $10^8$ \cite[Prop. 1]{HI}. Stoll \cite[Thm. 7]{Stoll} determined that the $N=6$ case of \cref{Poonen_conj} follows from the Birch and Swinnerton--Dyer conjecture for a specific abelian fourfold, namely the Jacobian of an algebraic curve over $\Q$ which parameterizes orbits of periodic points of quadratic polynomials of period $6$. This curve is referred to as a dynamical modular curve, in analogy to the modular curves which parameterize torsion of elliptic curves. The works of Morton, Flynn--Poonen--Schaefer, and others who have worked on this problem over $\Q$ and over higher degree number fields also take the vantage of studying the arithmetic of dynamical modular curves. 

If $f$ is a polynomial in $m > 1$ variables, then there
is no natural notion of ``dynamics'' for $f$; we cannot literally iterate $f$ like we do in the $m=1$ case. We consider in this work a \emph{specific} generalization of the single-variable case that leads to interesting  Diophantine questions. We study rational periodic vectors under this generalization, which we dub ``replacement dynamics'' for the purposes of this work, in the case of diagonal binary quadratic forms.

\subsection{Replacement dynamics}\label{dynamic_reps_section}

Let $S$ be a set, let $m$ be a positive integer, and let 
\[ f: S^m \longrightarrow S \]
denote an $S$-valued function in $m$ variables. Our ``replacement dynamics'' framework is as follows: for $\overline{v} = (v_1,\ldots, v_m) \in S^m$, and for each $j \in \{1, \ldots, m\}$ let 
\begin{equation}\label{iterate_fcn_notation}
f_{(j)}(\overline{v}) =  f_{(j)}(v_1,\ldots, v_m) := (v_1,\ldots,v_{j-1},f(\overline{v}),v_{j+1},\ldots,v_{m})
\end{equation}
denote the vector obtained by replacing the $j^\textnormal{th}$ entry of $\overline{v}$ with $f(\overline{v})$. Let $V_0 := \{\overline{v}\}$, and recursively define for each positive integer $k$
\[ V_k(f,\overline{v}) = \left\{f_{(j)}(\overline{w}) \mid \overline{w} \in V_{k-1} \textnormal{ and } j \in \{1,\ldots,m\}\right\}. \]
We define the directed graph $\mathcal{G}(f,\overline{v})$ to be that with vertex set in correspondence with the set
\[ V(f,\overline{v}) = \bigcup_{k \geq 0} V_k(f,\overline{v}), \]
which we refer to as the set of all iterates of $\overline{v}$ under $f$. For $\overline{v_1}, \overline{v_2} \in V(f,\overline{v})$, we have a directed edge in $\mathcal{G}(f,v)$ from the vertex corresponding to $\overline{v_1}$ to that corresponding to $\overline{v_2}$ if there is at least one index $j \in \{1, \ldots, m\}$ such that 
\[ f_{(j)}(\overline{v_1}) = \overline{v_2}. \]

\begin{definition}\label{periodic_def}
    Let $f : S^m \rightarrow S$ be a function and let $\overline{v} \in S^m$. 
    \begin{itemize}
        \item We call $\overline{v}$ a \textbf{periodic vector for $f$} if the vertex in $\mathcal{G}(f,\overline{v})$ corresponding to $\overline{v}$ is contained in a cycle. 
        \item We call $\overline{v}$ \textbf{pre-periodic} for $f$ if there exists a periodic vector $\overline{w}$ for $f$ for which there is a path in $\mathcal{G}(f,V_0)$ from the vertex corresponding to $\overline{v}$ to that corresponding to $\overline{w}$. 
    \end{itemize}
\end{definition}

\begin{remark}
    If $\overline{w} \in V(f,\overline{v})$, then $\mathcal{G}(f,\overline{w})$ is a subgraph of $\mathcal{G}(f,\overline{v})$. Moreover, with the same assumption, the statement that $\overline{w}$ is periodic for $f$ is equivalent to the existence of a cycle in $\mathcal{G}(f,\overline{v})$ containing the vertex corresponding to $\overline{w}$. 
\end{remark}

\begin{remark}
    We can view our framework in the context of the dynamics of (multivariate) polynomial semigroups, as in, e.g., \cite{HM96, Mello20, HS23,BHZ25}, as follows: given a set $M$ of polynomial maps $S^m \to S^m$, one considers the dynamical procedure in which at each step the input vector $\overline{v} \in S^m$ is replaced with $g(\overline{v})$ for some $g \in M$. In other words, this is the study of the dynamics of the semigroup
    \[ G_M := \{g_1 \circ g_2 \circ \cdots \circ g_n \mid n \geq 1 \text{ and } g_i \in M \text{ for all } i \in \{1,\ldots, n\}\} \]
    generated by $M$ under composition. For a fixed $f : S^m \to S$, our ``replacement dynamics'' is precisely this procedure for the polynomial semigroup $G_M$ generated by the set
    \[ M = \{f_{(j)} \mid j \in \{1,\ldots, m\}\}, \]
    with $f_{(j)} : S^m \to S^m$ as defined in \cref{iterate_fcn_notation}. The arithmetic study of the dynamics of (multivariate) polynomial semigroups, even of quadratic polynomials in particular, are of great interest, though it has seen the most traction in special cases. The current work can be seen as a restriction to a case where some headway can be made. 
\end{remark}

Our process admits a stratification of periodic vectors of a fixed period according to what we will call their type.

\begin{definition}\label{types_def}
    Let $N$ and $m$ be positive integers. 
    \begin{itemize}
        \item We define a \textbf{period type of period $N$ and of length $m$} to be an ordered $N$-tuple 
    \[ \mathbf{t} = (t_1,t_2,\ldots,t_N) \in \{1,2,\ldots,m\}^N \] 
    in the letters $1$ through $m$. We call any type $\mathbf{t'}$ of the form $\mathbf{t'} = (t_1,\ldots, t_{N'})$ with $1 \leq N' \leq N$ a \textbf{subtype} of $\mathbf{t}$. 
    
        \item We say that a periodic vector $\overline{v}$ for $f$ \textbf{is of type} $\mathbf{t}$ if there is a directed cycle of length $N$ in $\mathcal{G}(f,\overline{v})$ which begins at the vertex corresponding to $\overline{v}$ and has its $i^\textnormal{th}$ edge induced by replacement of the ${t_i}^\textnormal{th}$ entry in the input to $f$ for each $1 \leq i \leq N$. 
    \end{itemize}
\end{definition}

We further spell out this definition, both to ensure clarity and to introduce notation. For a type $\mathbf{t} = (t_1,\ldots,t_N)$ of period $N$ vectors, let $f_\mathbf{t} : S^m \rightarrow S^m$ denote the function obtained as follows: in the $N = 1$ case, 
\[ f_\mathbf{t} = f_{(t_1)} \]
is as defined in \cref{iterate_fcn_notation}. For $2 \leq i \leq m$, we define $f_{(t_1,\ldots, t_i)}$ inductively via
\begin{equation}\label{inductive_def_of_iterates} f_{(t_1,\ldots,t_i)} := f_{(t_i)} \circ f_{(t_1,\ldots,t_{i-1})}. 
\end{equation}
With this notation, a vector $\overline{v}$ is a period $N$ vector of type $\mathbf{t}$ for $f$ precisely when we have the equality
\begin{equation}\label{periodic_type_t_eqn}
 f_{\mathbf{t}}(\overline{v}) = \overline{v} 
\end{equation}
and there is no subtype $\mathbf{t'}$ of $\mathbf{t}$ of period $N'<N$ such that $f_{\mathbf{t'}}(\overline{v}) = \overline{v}$. 


We prove in \cref{type_classes_section}, specifically in \cref{sigma_bijection_lemma} and \cref{tau_bijection_lemma}, that it is sufficient to consider classes of types modulo an equivalence relation defined in \cref{type_class_def}. Following this setup, we focus our study on a specific family of functions of two-variables, namely diagonal binary quadratic forms over $\Q$, which highlights new phenomena in our setting while also connecting back to the work accomplished in the single-variable setting for quadratic polynomials.

\subsection{Binary quadratic forms and main results}\label{main_results_section}

In \cref{modular_polynomials_section}, we define for a fixed binary quadratic form $f = Cx^2 + Dy^2$ with $C,D \ne 0$ and a fixed binary (length $2$) type $\mathbf{t}$ a corresponding moduli space $Y_{f,\mathbf{t}}$ over $\Q$ whose points over $\overline{\Q}$ generically are in correspondence with periodic vectors for $f$ of type $\mathbf{t}$. Our aim is then to understand the rational points on $Y_{f,\mathbf{t}}$ for specified types $\mathbf{t}$.

We make the following definition, which identifies types $\mathbf{t}$ for which the process of determining all periodic vectors of type $\mathbf{t}$ will essentially come down to considering periodic points of (families of) single-variable quadratic polynomials. 

\begin{definition}\label{univariate_def}
    Let $\mathbf{t} = (t_1, \ldots, t_N) \in \{L,R\}^N$ be a binary type of period $N$ vectors. We say that $\mathbf{t}$ is a \textbf{univariate type} if there exists an integer $N'$ with $1 \leq N' \leq N$ such that $\mathbf{t}$ is equivalent (per \cref{type_class_def}) to the type
    \[ (\underbrace{L,\ldots, L}_{N'},\underbrace{R, \ldots, R}_{N-N'}) \]
    corresponding to $N'$ replacements on the left followed by $N-N'$ replacements on the right. 
\end{definition}

In \cref{univariate_prop}, we prove that if $\mathbf{t}$ is univariate as in the above definition with $N' < N$, then there are no periodic vectors of type $\mathbf{t}$. We additionally prove, from the results of \cite{Mor2, FPS} and \cite[Conj. 2]{Poonen}, that there are no periodic points of type $\mathbf{t}$ for $f$ a binary form as above and $\mathbf{t}$ a univariate type of period $4$ or $5$ and that there are conjecturally none of univariate type $\mathbf{t}$ of any period greater than or equal to $4$. 

All binary period types of period $N<4$ are univariate. In \cref{L_section}, \cref{LL_section}, and \cref{LLL_section}, we classify rational periodic vectors of univariate type for $f$ as above of periods $1, 2,$ and $3$, respectively (see \cref{self-loops-prop}, \cref{LL_prop}, and \cref{LLL_prop}). Such vectors can be thought of as in correspondence with periodic points of certain families of the single-variable quadratic polynomials studied in \cite{WR,Mor1}. 

Our main results are classifications of periodic vectors for diagonal binary quadratic forms $f$ in the two non-univariate cases of period at most $5$. 

\begin{theorem}\label{main_theorem}
Let $f(x,y) = Cx^2+Dy^2$ be a fixed binary quadratic form over $\Q$ with $C, D \neq 0$. 
\begin{enumerate}
    \item The form $f$ has a periodic vector over $\overline{\Q}$ of type $(L,R,L,R)$ if and only if $C \ne D$, and there is a one-to-one correspondence between such vectors and the roots of the degree $4$ polynomial $R_{C,D}(y) \in \Z[y]$ defined in \cref{LRLR_poly}:
    \begin{align*}
        \{\textnormal{roots of } R_{C,D}(y) \} &\longrightarrow \{\textnormal{periodic vectors of type } (L,R,L,R) \textnormal{ for } f \} \\
        y &\longmapsto (G_{C,D}(y),y)\; ,
    \end{align*}
    where $G_{C,D}(y) \in \Q[y]$ is an explicit polynomial. 

    In particular, there are at most $4$ periodic vectors of this type for $f$ over $\C$, the coordinates of a periodic vector of this type for $f$ generate a number field of degree at most $4$ (with degree $4$ being the generic behavior), and $f$ has a rational periodic vector over a number field $K$ if and only if $R_{C,D}(y)$ has a root over $K$. \\

    \item  Each periodic vector of type $(L,L,R,L,R)$ for $f$ over $\overline{\Q}$ corresponds to a root of the polynomial $S_{C,D}(y) \in \Z[y]$ defined in \cref{LLRLR_poly}, which has degree $10$ unless $C = -D$ or $c^2-3cd+4d^2 = 0$. If $C \ne D$, then there is a one-to-one correspondence
    \begin{align*}
        \{\textnormal{roots of } S_{C,D}(y) \} &\longrightarrow \{\textnormal{periodic vectors of type } (L,L,R,L,R) \textnormal{ for } f\} \\
        y &\longmapsto (H_{C,D}(y),y)\; ,
    \end{align*}
    where $H_{C,D}(y) \in \Q[y]$ is an explicit polynomial. 

    In particular, there are at most $10$ periodic vectors of this type for $f$ over $\C$, the coordinates of a periodic vector of type $(L,L,R,L,R)$ for $f$ generate a number field of degree at most $10$ (with degree $10$ being the generic behavior), and if $f$ has a periodic vector of this type over a number field $K$ then $S_{C,D}(y)$ must have a root over $K$. 
\end{enumerate}
\end{theorem}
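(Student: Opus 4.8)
The plan is to make the replacement dynamics of each type completely explicit, recast ``periodicity of type $\mathbf{t}$'' as a polynomial system in the coordinates of the vector --- this is the concrete incarnation of the moduli space $Y_{f,\mathbf{t}}$ of \cref{modular_polynomials_section} --- and then eliminate every variable but the $y$-coordinate. For part (1), a vector $(x,y) \in \overline{\Q}^2$ is periodic of type $(L,R,L,R)$ exactly when the orbit $(x,y) \to (x_1,y) \to (x_1,y_1) \to (x_2,y_1) \to (x_2,y_2)$ under $f_{(L)},f_{(R)},f_{(L)},f_{(R)}$ closes up, i.e. $x_2 = x$ and $y_2 = y$, where $x_1 = Cx^2+Dy^2$, $y_1 = Cx_1^2+Dy^2$, $x_2 = Cx_1^2+Dy_1^2$, and $y_2 = Cx_2^2+Dy_1^2$; one must additionally impose that no proper subtype --- $(L)$, $(L,R)$, or $(L,R,L)$, up to the equivalence of \cref{type_class_def} --- holds. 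Substituting the first two relations turns this into two polynomial equations in $x$ and $y$ alone (and a short manipulation of the four equations produces the convenient lower-degree relation $x + x_1 = y + y_1$, which one may use in place of one of them). I would then eliminate $x$ between this pair by a resultant or Gr\"obner computation, divide out the factors coming from the proper subtypes, identify the remaining factor with $R_{C,D}(y)$ of \cref{LRLR_poly}, and carry $x$ along the same elimination to obtain $x = G_{C,D}(y)$ on the locus $R_{C,D}(y) = 0$ --- so that the stated map $y \mapsto (G_{C,D}(y),y)$ is well defined and, having second coordinate $y$, automatically injective. It then remains to read off from the explicit $R_{C,D}$ that it has degree $4$, that for $C \ne D$ none of its roots satisfies a subtype equation (so the correspondence is onto the type-$(L,R,L,R)$ vectors), and that for $C = D$ every solution of the system is of a proper subtype, leaving no vectors of type $(L,R,L,R)$ at all; the bounds ``at most $4$ over $\C$'' and ``number field of degree $\le 4$'' are then immediate, with degree $4$ being generic since $R_{C,D}$ is irreducible for generic $(C,D)$.

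Part (2) runs the same recipe for the type $(L,L,R,L,R)$: the orbit is $(x,y) \to (x_1,y) \to (x_2,y) \to (x_2,y_1) \to (x_3,y_1) \to (x_3,y_2)$ with $x_1 = Cx^2+Dy^2$, $x_2 = Cx_1^2+Dy^2$, $y_1 = Cx_2^2+Dy^2$, $x_3 = Cx_2^2+Dy_1^2$, $y_2 = Cx_3^2+Dy_1^2$, it closes up via $x_3 = x$ and $y_2 = y$, and minimality now means failure of the four subtype equations for $(L)$, $(L,L)$, $(L,L,R)$, $(L,L,R,L)$. Eliminating $x_1, x_2, y_1$ and then $x$, and removing the subtype factors, leaves $S_{C,D}(y)$ of \cref{LLRLR_poly}; inspecting its leading coefficient as an element of $\Z[C,D]$ shows it is a nonzero multiple of $(C+D)(C^2 - 3CD + 4D^2)$, which is precisely why the degree is $10$ off those two loci and drops on them. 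Because any solution of the full system --- in particular any rational periodic vector of this type, which survives the removal of the subtype factors --- projects to a root of $S_{C,D}$, the necessity of a rational root is immediate; and when $C \ne D$ the birational-projection argument of part (1) upgrades this to the one-to-one correspondence $y \mapsto (H_{C,D}(y),y)$, with $H_{C,D}$ obtained by carrying $x$ through the elimination.

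The main obstacle here is bookkeeping rather than any single hard idea: the period-$5$ elimination is sizeable (a degree-$10$ eliminant with $C$ and $D$ retained as parameters), and the delicate step throughout is to strip off \emph{exactly} the spurious factors --- those from proper subtypes, and those from the degenerate configurations where the moduli description of \cref{modular_polynomials_section} fails (the word ``generically'' there) --- and then to prove that what survives has the asserted degree and that each of its roots genuinely has exact type $\mathbf{t}$, which for period $5$ requires ruling out periodicity under four families of subtype maps. The clean exceptional loci ($C = D$ in part (1); $C = -D$ and $C^2 - 3CD + 4D^2 = 0$ in part (2)) should fall out as the vanishing loci of the leading coefficients of the eliminants, and one must separately verify that the $C,D$-denominators of the coefficients of $G_{C,D}$ and $H_{C,D}$ do not vanish at the relevant $(C,D)$, so that these maps are genuinely defined there.
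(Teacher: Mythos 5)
Your proposal is correct and follows essentially the same route as the paper: both set up the polynomial system $f_{\mathbf{t}}(\overline{v}) = \overline{v}$ over $\Q(c,d)$, reduce to a zero-dimensional variety, strip off the subtype components (here just the two period-$1$ points $(0,0)$ and $(\tfrac{1}{c+d},\tfrac{1}{c+d})$), and identify the surviving component with $\{R_{C,D}(y)=0,\ x=G_{C,D}(y)\}$ (resp.\ $S_{C,D}$, $H_{C,D}$), with the $C\ne D$ condition arising exactly as you say because a drop to a proper subtype forces left and right replacements to agree. The paper carries out the elimination/decomposition in Magma rather than by hand, and your observation that the roots of $R_{C,D}$ and $S_{C,D}$ are honestly of the claimed type when $C \ne D$, together with the leading-coefficient analysis for the degree statements, matches the paper's argument.
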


The proofs of these results appear in \cref{LRLR_section} and \cref{LLRLR_section}; see \cref{LRLR_thm} and \cref{LLRLR_thm}. From part (1) of \cref{main_theorem}, we prove in analogy to \cref{Poonen_conj} the following result (proven and restated in \cref{LRLR_section} as \cref{LRLR_thm_rationals}).
\begin{theorem}
    There exist no rational numbers $C,D \neq 0$ so that the binary quadratic form $f(x,y) = Cx^2+Dy^2$ has a periodic vector of type $(L,R,L,R)$ over $\Q$. 
\end{theorem}

We remain unsure of whether there exist such binary quadratic forms with rational periodic vectors of type $(L,L,R,L,R)$, but using part (2) of \cref{main_theorem} we are able in \cref{LLRLR_section} to equate the existence of such a vector to the existence of a smooth rational point on a specified projective curve of geometric genus $X$. Computational searches for rational points on $X$ lead us to conjecture that this period type also does not admit rational periodic vectors (see \cref{LLRLR_conjecture}).

All computations mentioned in this work were performed using the Magma computer algebra system \cite{magma}, and all relevant code can be found at the repository \cite{Repo}. 

\section*{Acknowledgements}

This project was completed as part of the Liberal Arts and Sciences Undergraduate Research Initiative at the University of Illinois Chicago. We warmly thank the LASURI program for their support. We thank Nathan Jones for helpful comments on a earlier version of this work, and we graciously thank the anonymous referee for a key observation leading to the proof of \cref{LRLR_thm_rationals}.   

\section{Period type classes}\label{type_classes_section}

For this section, we fix positive integers $m$ and $N$, fix a set $S$, and let $f: S^m \to S$ be an $S$-valued function in $m$ variables. 

We first note that our iterating functions for types respect concatenation. For $\mathbf{t} = (t_1,\ldots,t_N)$ and $\mathbf{s} = (s_1,\ldots,s_M)$ types of vectors in $m$ variables with periods $N$ and $M$, respectively, we let   \[ \mathbf{t}\cdot \mathbf{s} = (t_1,\ldots,t_N,s_1,\ldots,s_M) \]
denote the concatenation of $\mathbf{t}$ and $\mathbf{s}$. 
\begin{lemma}
    With $\mathbf{t}$ and $\mathbf{s}$ as above, we have
    \[ f_{\mathbf{t} \cdot \mathbf{s}} = f_{\mathbf{s}} \circ f_{\mathbf{t}}. \]
\end{lemma}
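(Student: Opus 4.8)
The plan is to induct on the length $M$ of the second type $\mathbf{s} = (s_1,\ldots,s_M)$, with $\mathbf{t} = (t_1,\ldots,t_N)$ held fixed but arbitrary; at each stage everything reduces to unwinding the recursive definition \eqref{inductive_def_of_iterates} of the iterate functions together with associativity of composition of functions $S^m \to S^m$.

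For the base case $M = 1$ the concatenation is $\mathbf{t}\cdot\mathbf{s} = (t_1,\ldots,t_N,s_1)$, and applying \eqref{inductive_def_of_iterates} to peel off the last entry gives $f_{\mathbf{t}\cdot\mathbf{s}} = f_{(s_1)} \circ f_{(t_1,\ldots,t_N)} = f_{\mathbf{s}} \circ f_{\mathbf{t}}$, since $f_{\mathbf{s}} = f_{(s_1)}$ when $\mathbf{s}$ has length one. For the inductive step, suppose the claim holds for all length-$(M-1)$ types and set $\mathbf{s}' = (s_1,\ldots,s_{M-1})$, so that $\mathbf{t}\cdot\mathbf{s}$ is $\mathbf{t}\cdot\mathbf{s}'$ with the single entry $s_M$ appended. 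Peeling off $s_M$ via \eqref{inductive_def_of_iterates}, then invoking the induction hypothesis on $\mathbf{s}'$, then re-folding using \eqref{inductive_def_of_iterates} applied to $\mathbf{s}$ itself, yields
\[ f_{\mathbf{t}\cdot\mathbf{s}} = f_{(s_M)} \circ f_{\mathbf{t}\cdot\mathbf{s}'} = f_{(s_M)} \circ f_{\mathbf{s}'} \circ f_{\mathbf{t}} = f_{\mathbf{s}} \circ f_{\mathbf{t}}, \]
which completes the induction.

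I expect no genuine obstacle here; the only points requiring a little care are matching the base case of this induction with the base clause of \eqref{inductive_def_of_iterates} and keeping the indexing of the concatenated tuple straight, so that its final entry really is $s_M$ when one applies the recursive clause. An equivalent route, if one prefers to avoid even this bookkeeping, is to first establish by the same kind of induction the closed form $f_{(t_1,\ldots,t_N)} = f_{(t_N)} \circ \cdots \circ f_{(t_1)}$ and then read off the lemma immediately from associativity of composition.
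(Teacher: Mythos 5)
Your induction on the length of $\mathbf{s}$ is correct and is essentially the paper's argument: the paper likewise reduces everything to the recursive clause \eqref{inductive_def_of_iterates}, just stated more tersely (it notes that by that definition it suffices to check the single-step case, which is the same peeling-off step you carry out). Your closing remark about first proving the closed form $f_{(t_1,\ldots,t_N)} = f_{(t_N)}\circ\cdots\circ f_{(t_1)}$ is also in the spirit of the paper's one-line reduction, so nothing further is needed.
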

\begin{proof}
    By the inductive definition of the polynomials corresponding to a type, it suffices to consider the case in which $\mathbf{t} = (t)$ and $\mathbf{s} = (s)$ are both types of period $1$ points. The equality $f_{(t,s)} = f_{(s)}(f_{(t)})$ is then immediate from the construction of these polynomials in \cref{inductive_def_of_iterates}.
\end{proof}

We define a rotation operator $\sigma$ which acts on a type $\mathbf{t} = (t_1,t_2, \ldots, t_N)$ for period $N$ vectors as follows:
\[ \sigma \mathbf{t} = \sigma (t_1, t_2,\ldots, t_{N-1}, t_N) = (t_N,t_1,t_2,\ldots,t_{N-1}). \]
This provides a group action by 
\[ \langle \sigma \rangle = \{\textnormal{Id},\sigma, \ldots, \sigma^{N-1}\} \cong \mathbb{Z}/N\mathbb{Z} \]
on the set of all types of period $N$ vectors, where $\sigma^j$ denotes repeated action $j$ times by $\sigma$ and $\textnormal{Id}$ denotes the identity function on the set of types of period $N$ vectors. 

\begin{proposition}\label{sigma_bijection_lemma}
    For any type $\mathbf{t}$ of period $N$ vectors and any $j \in \mathbb{Z}$, there is a bijection between the set of type $\mathbf{t}$ vectors for $f$ and the set of type $\sigma^j\mathbf{t}$ vectors for $f$. 
\end{proposition}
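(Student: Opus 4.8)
The plan is to reduce to the case $j=1$ and construct an explicit bijection. Since $\langle\sigma\rangle\cong\Z/N\Z$, any $\sigma^j$ is obtained by iterating $\sigma$, so it suffices to produce, for every type $\mathbf{t}=(t_1,\ldots,t_N)$ of period $N$ vectors, a bijection $\Phi_\mathbf{t}$ from the set of type $\mathbf{t}$ vectors for $f$ to the set of type $\sigma\mathbf{t}=(t_N,t_1,\ldots,t_{N-1})$ vectors for $f$; composing these across a chain of rotations then handles general $j$, including negative $j$ since each $\Phi_\mathbf{t}$ will be invertible.

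The natural candidate for $\Phi_\mathbf{t}$ is "apply the last replacement step." That is, if $\overline{v}$ is a type $\mathbf{t}$ vector, set $\Phi_\mathbf{t}(\overline{v}) = f_{(t_N)}(\overline{v})$. The key computation is that the iterating functions conjugate correctly under rotation: using the concatenation lemma above, $f_{\sigma\mathbf{t}} = f_{(t_{N-1})}\circ\cdots\circ f_{(t_1)}\circ f_{(t_N)}$, whereas $f_{\mathbf{t}} = f_{(t_N)}\circ f_{(t_{N-1})}\circ\cdots\circ f_{(t_1)}$. Writing $g := f_{(t_{N-1})}\circ\cdots\circ f_{(t_1)}$ and $h := f_{(t_N)}$, we have $f_{\mathbf{t}} = h\circ g$ and $f_{\sigma\mathbf{t}} = g\circ h$, so if $\overline{v}$ satisfies $f_{\mathbf{t}}(\overline{v})=(h\circ g)(\overline{v})=\overline{v}$, then $h(\overline{v}) =: \overline{w}$ satisfies $f_{\sigma\mathbf{t}}(\overline{w}) = (g\circ h)(\overline{w}) = g(h(g(h(\overline{v})))) $; more directly, $f_{\sigma\mathbf{t}}(\overline{w}) = g(h(\overline{w})) = g(h(h(\overline{v})))$—so I should instead argue $f_{\sigma\mathbf{t}}(\overline{w}) = (g\circ h)(\overline{w})$ and substitute $\overline{w}=h(\overline{v})$ to get $g(h(h(\overline{v})))$, which equals $g(\overline v)$ only after using $h\circ g(\overline v)=\overline v$. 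The clean way: $f_{\sigma\mathbf{t}}(h(\overline{v})) = g(h(h(\overline{v})))$ is not obviously $h(\overline v)$; rather, note $h(\overline v) = h(h(g(\overline v))) $ is wrong too. Let me restate: from $h(g(\overline v))=\overline v$ we get $g(\overline v) = $ (a preimage), and then $f_{\sigma\mathbf t}(\overline w)$ with $\overline w = g(\overline v)\cdot$—the correct map is $\Phi_\mathbf{t}(\overline v) = g(\overline v) = f_{(t_{N-1},\ldots,t_1)}(\overline v)$, i.e. apply the first $N-1$ steps; then $f_{\sigma\mathbf t}(g(\overline v)) = (g\circ h)(g(\overline v)) = g\big(h(g(\overline v))\big) = g(\overline v)$, as desired.

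So the bijection is $\Phi_\mathbf{t}(\overline{v}) := f_{(t_1,\ldots,t_{N-1})}(\overline{v})$, with inverse on type $\sigma\mathbf{t}$ vectors given by $\overline{w}\mapsto f_{(t_N)}(\overline{w})$; the two compositions are $f_{\mathbf{t}}$ and $f_{\sigma\mathbf{t}}$ respectively, which act as the identity on the relevant sets, giving mutually inverse maps. It remains to check that $\Phi_\mathbf{t}$ sends \emph{minimal}-period vectors to minimal-period vectors, i.e. that $\overline v$ has no proper subtype fixing it if and only if $\Phi_\mathbf t(\overline v)$ has none: this follows because $\sigma$ restricts to an action on types of period exactly $N'$ for each divisor-free consideration, and a shorter relation $f_{\mathbf{t}'}(\Phi_\mathbf t(\overline v)) = \Phi_\mathbf t(\overline v)$ would, by conjugating back via $f_{(t_N)}$, produce a shorter relation for $\overline v$ (and symmetrically); one must track that the edge-labels along the cycle through $\Phi_\mathbf t(\overline v)$ are exactly the cyclic rotation of those through $\overline v$, so the existence of a repetition pattern is rotation-invariant. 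The main obstacle is precisely this last bookkeeping about minimality and about the \emph{type} (not just periodicity) being preserved—verifying that the cycle in $G(f,\Phi_\mathbf t(\overline v))$ genuinely has its $i$-th edge induced by replacing the $(\sigma\mathbf t)_i$-th coordinate—rather than the algebra of the conjugation identity, which is immediate from the concatenation lemma.
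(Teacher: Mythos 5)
Your final bijection $\overline{v}\mapsto f_{(t_1,\ldots,t_{N-1})}(\overline{v})$ with inverse $\overline{w}\mapsto f_{(t_N)}(\overline{w})$ is essentially the paper's own argument: the paper uses the single-step map $\overline{v}\mapsto f_{(t_1)}(\overline{v})$ with inverse $f_{(t_2,\ldots,t_N)}$, which is the same construction rotated the other way and equally sufficient since $j$ ranges over all of $\Z$. The minimality bookkeeping you flag at the end is a genuine point, but the paper's proof does not address it either; it is most cleanly dispatched by reading ``type $\mathbf{t}$'' via the directed-cycle definition, under which moving the starting vertex along the cycle visibly preserves both the cycle and the cyclic rotation of its edge labels.
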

\begin{proof}
It suffices to prove the claim for $j = 1$, which follows from the following mutually inverse bijections:
\begin{align*} 
\{\textnormal{type $\mathbf{t}$ vectors for $f$}\} &\longleftrightarrow \{\textnormal{type $\sigma \mathbf{t}$ vectors for $f$}\} \\
\overline{v} &\longmapsto f_{(t_1)}(\overline{v}) \\
f_{(t_2,\ldots, t_{N})}(\overline{w}) &\longmapsfrom \overline{w}. 
\end{align*}
\end{proof}

\begin{remark}
    Note that it is important that we specified the type $\mathbf{t}$, rather than just the period $N$, in \cref{sigma_bijection_lemma}; the period may not identify a unique cycle to which a periodic vector belongs, while the type explicitly does. 
\end{remark}

Additionally, we have an action of the symmetric group on $m$ letters $S_m$ as follows: for $\mathbf{t}$ a type of period $N$ vectors and $\tau \in S_m$,
\[ \tau \mathbf{t} = \tau (t_1,\ldots, t_N) = (\tau(t_1), \ldots, \tau(t_N)). \]
We will also, by abuse of notation, let $\tau$ denote the function $S^m \rightarrow S^m$ obtained by permutation of a vector's entries in the following manner:
\[ \tau(v_1,\ldots, v_m) = (v_{\tau(1)}, \ldots, v_{\tau(m)}). \]

\begin{proposition}\label{tau_bijection_lemma}
    For $\tau \in S_m$ and $\mathbf{t}$ as above, there is a bijection between the set of type $\mathbf{t}$ vectors for $f$ and the set of type $\tau \mathbf{t}$ vectors for $f \circ \tau$.
\end{proposition}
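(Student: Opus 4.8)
The plan is to exhibit an explicit bijection between the set of type $\mathbf{t}$ vectors for $f$ and the set of type $\tau\mathbf{t}$ vectors for $f \circ \tau$, given simply by the entry-permutation map $\tau : S^m \to S^m$ (or its inverse). The natural guess is that $\overline{v} \mapsto \tau^{-1}(\overline{v})$ sends type $\mathbf{t}$ vectors for $f$ to type $\tau\mathbf{t}$ vectors for $f\circ\tau$, with inverse $\overline{w} \mapsto \tau(\overline{w})$; one should check the exact direction by bookkeeping indices carefully, since the paper's convention $\tau(v_1,\ldots,v_m) = (v_{\tau(1)},\ldots,v_{\tau(m)})$ makes $\tau$ act on \emph{positions}.

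The key computational step is a single intertwining identity at the level of one-step iteration: for each letter $j \in \{1,\ldots,m\}$,
\begin{equation*}
    \tau \circ (f\circ\tau)_{(j)} = f_{(\tau(j))} \circ \tau
\end{equation*}
as maps $S^m \to S^m$ (or the analogous identity with $\tau$ and $\tau^{-1}$ swapped, depending on the chosen direction). This is checked by evaluating both sides on a vector: the left side computes $f(\tau(\overline v))$ and places it in slot $j$, then permutes; the right side permutes first, then places $f$ of the permuted vector into slot $\tau(j)$ — and these agree because permuting and then reading slot $\tau(j)$ is the same as reading slot $j$ and then permuting, while the scalar $f(\tau(\overline v))$ is unchanged. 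Once this identity is in hand, iterating it along the type (using the inductive definition \cref{inductive_def_of_iterates} and an easy induction on $N$) yields
\begin{equation*}
    \tau \circ (f\circ\tau)_{\mathbf{t}} = f_{\tau\mathbf{t}} \circ \tau,
\end{equation*}
so $(f\circ\tau)_{\mathbf{t}}(\overline v) = \overline v$ if and only if $f_{\tau\mathbf{t}}(\tau(\overline v)) = \tau(\overline v)$. The same identity applied to every proper subtype $\mathbf{t}'$ of $\mathbf{t}$ (noting that subtypes of $\tau\mathbf{t}$ are exactly the $\tau\mathbf{t}'$ for subtypes $\mathbf{t}'$ of $\mathbf{t}$, and that $\tau$ is a bijection so the ``no shorter period'' conditions transport faithfully) shows that the minimality condition in \cref{periodic_type_t_eqn} is preserved, so genuine period $N$ type $\mathbf{t}$ vectors correspond to genuine period $N$ type $\tau\mathbf{t}$ vectors. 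Since $\tau$ is invertible on $S^m$, this correspondence is a bijection.

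I expect the only real obstacle to be getting the direction and index bookkeeping exactly right in the intertwining identity — i.e.\ whether it is $\tau$ or $\tau^{-1}$ that implements the bijection, and whether the identity reads $\tau \circ (f\circ\tau)_{(j)} = f_{(\tau(j))}\circ\tau$ or has $\tau^{-1}$ in place of one of the $\tau$'s — given the position-permutation convention in force. Everything after that identity is a routine induction on the length of the type. It is also worth remarking, as the paper presumably does, that combining this proposition with \cref{sigma_bijection_lemma} justifies passing to the equivalence classes of types introduced in \cref{type_class_def}.
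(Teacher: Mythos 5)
Your proposal matches the paper's proof: the paper exhibits exactly the bijection $\overline{v}\mapsto\tau^{-1}(\overline{v})$ with inverse $\overline{w}\mapsto\tau(\overline{w})$, verifies the one-step intertwining identity $(f\circ\tau)_{\tau(t_1)}(\tau^{-1}(\overline{v}))=\tau^{-1}(f_{(t_1)}(\overline{v}))$, and extends it along the type by the inductive definition. The only caveat is the one you already flag: under the position-permutation convention the identity as you first wrote it should carry $\tau^{-1}$ in the subscript (equivalently, take the replacement index on the $(f\circ\tau)$-side to be $\tau(j)$), after which everything goes through exactly as you describe.
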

\begin{proof}
    We have the following bijections:
\begin{align*} 
\{\textnormal{type $\mathbf{t}$ vectors for $f$}\} &\longleftrightarrow \{\textnormal{type $\tau \mathbf{t}$ vectors for $f \circ \tau$}\} \\
\overline{v} &\longmapsto \tau^{-1} (\overline{v}) \\
\tau (\overline{w}) &\longmapsfrom \overline{w}. 
\end{align*}
The maps are clearly mutually inverse, so we just need to show they actually have the claimed codomains. Indeed, $(f \circ \tau) (\tau^{-1}(\overline{v})) = f(\overline{v})$, and hence for $t_1 \in \{1,\ldots, m\}$ and $\overline{v} = (v_1,\ldots, v_m)$ we have that
\begin{align*} 
(f \circ \tau)_{\tau (t_1)}(\tau^{-1} (\overline{v})) &= (f \circ \tau)_{\tau (t_1)}(v_{\tau^{-1}(1)},\ldots,\underbrace{v_{t_1}}_{\tau(t_1)^\textnormal{th} \textnormal{ entry}},\ldots,v_{\tau^{-1}(m)}) \\
&= (v_{\tau^{-1}(1)},\ldots,f(\overline{v}),\ldots, v_{\tau^{-1}(m)}) \\
&= \tau^{-1} (f_{(t_1)}(\overline{v})).
\end{align*}
By the inductive definition of $f_\mathbf{t}$ for $\mathbf{t} \in \{1,\ldots,m\}^N$, we then have 
\[ (f \circ \tau)_{\tau \mathbf{t}}(\tau^{-1} (\overline{v})) = \tau^{-1} (f_\mathbf{t}(\overline{v})). \]
Thus, if $\overline{v}$ is a type $\mathbf{t}$ vector we have 
\[ (f \circ \tau)_{\tau \mathbf{t}}(\tau^{-1} (\overline{v})) = \tau^{-1} (\overline{v}). \]
\end{proof}

The actions of $\langle \sigma \rangle$ and of $S_m$ on the set of type $\mathbf{t}$ vectors for $f$ commute, giving an action of $\langle \sigma \rangle \times S_m$ on this set. Note by the above that this group action preserves the period corresponding to a type. We define type classes to be classes of types modulo this action:
\begin{definition}\label{type_class_def}
A \textbf{period type class of period $N$ in $m$ variables} is an element of the quotient set
\[ \{1,\ldots, m\}^N/\left(\langle \sigma \rangle \times S_m \right). \]
We say two period types are \textbf{equivalent} if they have the same image under the natural quotient map to the set of type classes.  
\end{definition}
In common combinatorial terminology, our type classes are necklaces in $m$ colors modulo permutations on the color set (which seem to have first been studied in \cite{Fine, GR61}). The length of a representative necklace is the period corresponding to the type class.

\subsection{The binary case}

In what follows, we will hone in on the binary case of $m=2$. In this setting, we will write a type $\mathbf{t} \in \{1,2\}^N$ of period $N$ vectors instead as an element of $\{L,R\}^N$ via the bijection sending $1$ to $L$ and $2$ to $R$. That is, $L$ will stand for replacement on the left, and $R$ for replacement on the right. 

In this case, the number of type classes of period $N$ is given by \cite[\S 6]{Fine}: 
\[ \#\left( \{1,\ldots, m\}^N/\left(\langle \sigma \rangle \times S_m \right) \right) = \sum_{d \mid N} \frac{\phi(2d)\cdot 2^{N/d}}{2N}, \]
where $\phi$ denotes Euler's totient function. The sequence of these counts as a function of the period $N$ is \cite[Sequence A000013]{OEIS}.

\section{Replacement Dynamics for Binary quadratic forms}\label{binary_forms_section}

As mentioned in the introduction, there has been significant work on rational periodic points of univariate polynomials of degree $2$. Even in this degree, the question gets difficult rather quickly; while \cref{Poonen_conj} states that there are no rational period $N$ points for quadratic polynomials when $N \geq 4$, this has only been proven unconditionally for $N=4,5$ and conditionally for $N=6$. 

We also stick to consideration of the degree $2$ case here, specifically in the case of two variables. Moreover, we study only a subfamily of the full family of binary degree $2$ polynomials, namely diagonal binary quadratic forms over $\mathbb{Q}$. That is, we consider the family of polynomials
\[ f(x,y) = Cx^2+Dy^2 \]
with $C,D \in \mathbb{Q}$ both nonzero, and we ask about their rational periodic vectors under replacement dynamics. The restriction to this family, as opposed to consideration of all quadratic polynomials in two-variables over $\Q$, cuts down the dimension of the varieties we consider in this work. We emphasize that this is the only reason for the restriction in this work; we do not make notable use of the theory of quadratic forms. 

\subsection{Dynamic modular polynomials}\label{modular_polynomials_section}

Let $\mathbf{t} = (t_1,\ldots, t_N) \in \{L,R\}^N$ be a binary period type of period $N$, and consider the family of diagonal binary quadratic forms
\[ f(x,y) = cx^2 + dy^2 \]
over $\Q$. The equality of vectors
\begin{equation}\label{dyn_poly_eqn} 
f_{\mathbf{t}}(\overline{v}) - f(\overline{v}) = \overline{0}
\end{equation}
in the indeterminates $x,y,c,d$ defines two affine varieties in the affine space $\mathbb{A}^4_{\mathbb{Q}}$, each of which is a family of varieties of dimension at least one over $\mathbb{A}^2_{\mathbb{Q}}$ via projection to the $c$ and $d$ coordinates. A rational point $(x,y,c,d) = (v_1,v_2,C,D) \in \mathbb{Q}^4$ satisfying both equations corresponds to a vector $(v_1,v_2)$ which is periodic for $f = Cx^2-Dy^2$, and is either periodic of type $\mathbf{t}$ or has period dividing $N$. 

Taking another vantage, we can consider $f$ as a polynomial in two variables $x$ and $y$ over the function field $K = \mathbb{Q}(c,d)$. From this perspective, \cref{dyn_poly_eqn} defines two affine varieties of dimension at least one over $K$, at least one of which is a curve. More specifically, let $\pi_L, \pi_R : \mathbb{Q}^2 \to \mathbb{Q}$ be the projections onto the first and second entry, respectively. \cref{dyn_poly_eqn} is equivalent to the system of equalities
\begin{align*} 
&\pi_L(f_\mathbf{t}(x,y)) = x , \qquad \textnormal{ and } \\
&\pi_R(f_\mathbf{t}(x,y)) = y,
\end{align*}
and we have factorizations of the form
\begin{align*}
    &\pi_L(f_\mathbf{t}(x,y)) - x = \Phi_{\mathbf{t},L}(x,y) \cdot \Psi_{\mathbf{t},L}(x,y) , \\
    &\pi_R(f_\mathbf{t}(x,y)) - y = \Phi_{\mathbf{t},R}(x,y) \cdot  \Psi_{\mathbf{t},R}(x,y),
\end{align*}
where $\Phi_{\mathbf{t},L}(x,y)$ and $\Phi_{\mathbf{t},R}(x,y)$ are the monic polynomials over $K$ whose common roots $(x,y)$ generically correspond to the periodic vectors of type $\mathbf{t}$ for $f$. These are defined inductively as follows: for $t_1 \in \{L,R\}$ we have
\begin{align*} 
\Phi_{(t_1),L} &= \pi_L(f_{(t_1)}(x,y)) - x \\
\Phi_{(t_1),R} &= \pi_R(f_{(t_1)}(x,y)) - y. 
\end{align*}
For general $\textbf{t}$ of period $N$, we then define $\Phi_{\mathbf{t},L}$ to be the polynomial obtained by dividing $\pi_L(f_{\mathbf{t}}(x,y)) - x$ by any factors of the form $\Phi_{\mathbf{t'},L}$, for $\mathbf{t'}$ a subtype of $\mathbf{t}$ of period $N'<N$, that appear in its factorization over $K$, and similarly for $\Phi_{\mathbf{t},R}$ with $L$ and $x$ replaced with $R$ and $y$, respectively. The description above then follows from our construction.
\begin{lemma}
Let $\mathbf{t}$ be a type of period $N$ vectors. All but finitely many of the common solutions to the specializations of $\Phi_{\mathbf{t},L}$ and $\Phi_{t,R}$ to values $C, D$ of $c$ and $d$ correspond to periodic vectors of type $\mathbf{t}$ for $f(x,y) = Cx^2+Dy^2$. 
\end{lemma}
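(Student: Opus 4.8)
The plan is to prove an exact correspondence over the function field $K = \Q(c,d)$ and then spread it out over the $(c,d)$-plane. I will first record the elementary half. Suppose $C,D \in \Q^\times$ lie outside the proper Zariski-closed subset of $\A^2_\Q$ on which some coefficient (a rational function of $c,d$) of $\Phi_{\mathbf{t},L}$ or $\Phi_{\mathbf{t},R}$ has a pole, and let $(x_0,y_0) \in \overline{\Q}^2$ be a common zero of the specializations $\Phi_{\mathbf{t},L}(x,y,C,D)$ and $\Phi_{\mathbf{t},R}(x,y,C,D)$. Since $\Phi_{\mathbf{t},L} \mid \pi_L(f_{\mathbf{t}}(x,y)) - x$ and $\Phi_{\mathbf{t},R} \mid \pi_R(f_{\mathbf{t}}(x,y)) - y$ over $K$, and these divisibilities survive specialization, the vector $\overline{v} := (x_0,y_0)$ satisfies $f_{\mathbf{t}}(\overline{v}) = \overline{v}$ for $f = Cx^2 + Dy^2$. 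By \cref{periodic_type_t_eqn} and the paragraph following it, $\overline{v}$ is then a periodic vector of type $\mathbf{t}$ for $f$ unless $f_{\mathbf{t}'}(\overline{v}) = \overline{v}$ for some proper prefix subtype $\mathbf{t}' = (t_1,\ldots,t_{N'})$ with $N' < N$. Thus everything reduces to bounding the set of common zeros on which some such $\mathbf{t}'$ exists.

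I would bound this ``exceptional set'' by induction on the period $N$. The case $N=1$ is vacuous, there being no proper subtype. For the inductive step, if $\overline{v}$ is a common zero of $\Phi_{\mathbf{t},L},\Phi_{\mathbf{t},R}$ over $\overline{K}$ with $f_{\mathbf{t}'}(\overline{v}) = \overline{v}$, then choosing $N'$ minimal makes $\overline{v}$ a periodic vector of type $\mathbf{s} := (t_1,\ldots,t_{N'})$ for the generic form $cx^2+dy^2$; by the inductive hypothesis for $\mathbf{s}$, such vectors lie, away from a finite set, on the common zero locus of $\Phi_{\mathbf{s},L},\Phi_{\mathbf{s},R}$. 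Because $\Phi_{\mathbf{s},L}$ was divided out of $\pi_L(f_{\mathbf{t}})-x$ and $\Phi_{\mathbf{s},R}$ out of $\pi_R(f_{\mathbf{t}})-y$ in forming $\Phi_{\mathbf{t},L}$ and $\Phi_{\mathbf{t},R}$, no positive-dimensional component of the common zero locus of $\Phi_{\mathbf{s},L},\Phi_{\mathbf{s},R}$ survives in the common zero locus of $\Phi_{\mathbf{t},L},\Phi_{\mathbf{t},R}$, so the two loci meet in a finite set; ranging over the finitely many proper prefixes $\mathbf{t}'$ shows the exceptional set over $\overline{K}$ is finite. In geometric terms: if $B$ denotes the subset of the total common-zero variety $X = V(\Phi_{\mathbf{t},L},\Phi_{\mathbf{t},R}) \subseteq \A^4_\Q$ consisting of points whose first two coordinates are fixed by some proper-prefix iterate, then $B$ has no component dominating the $(c,d)$-plane with positive-dimensional generic fibre. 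By Chevalley's theorem and upper semicontinuity of fibre dimension, there is a proper closed subset $Z_0 \subsetneq \A^2_\Q$ outside which the fibre of $B$ is finite; for $(C,D) \notin Z_0$ the common zeros of the specialized polynomials that fail to be periodic vectors of type $\mathbf{t}$ are exactly these finitely many points, which is the assertion.

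The main obstacle is the inductive claim of the second paragraph, i.e., that the division carried out in constructing $\Phi_{\mathbf{t},L}$ and $\Phi_{\mathbf{t},R}$ really strips away, up to a finite set, the vectors of every proper-prefix subtype. Here one must work with irreducible factorizations over $K$ and keep track of multiplicities, since the quotients $\Psi_{\mathbf{t},L},\Psi_{\mathbf{t},R}$ are permitted to contain factors other than the subtype polynomials; and it is precisely the fact that two coprime plane curves over $\overline{K}$ still meet in finitely many points that forces the conclusion ``all but finitely many'' rather than ``all''. The spreading-out in the last part of the second paragraph is then routine.
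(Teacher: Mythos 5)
Your skeleton is the paper's: a common zero of the specialized $\Phi_{\mathbf{t},L}$ and $\Phi_{\mathbf{t},R}$ satisfies $f_{\mathbf{t}}(\overline{v})=\overline{v}$ and is therefore periodic of some subtype $\mathbf{t'}$ of $\mathbf{t}$; the only possible failures are vectors with $N'<N$, and these are bounded by intersecting with the subtype loci, which by construction meet $Y_{\mathbf{t}}$ in finitely many points; finitely many subtypes finishes it. The paper states exactly this argument directly, with no induction and no spreading-out.

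The step of yours that does not work as written is the appeal to ``the inductive hypothesis for $\mathbf{s}$.'' The lemma for $\mathbf{s}$ says that all but finitely many common zeros of $\Phi_{\mathbf{s},L},\Phi_{\mathbf{s},R}$ are periodic of type $\mathbf{s}$; what your argument needs is the \emph{converse} containment, namely that a vector fixed by $f_{\mathbf{s}}$ lies, up to finitely many exceptions, on the common zero locus of $\Phi_{\mathbf{s},L}$ and $\Phi_{\mathbf{s},R}$. The induction therefore gives you nothing here; the needed containment must instead be extracted from the factorizations $\pi_L(f_{\mathbf{s}})-x=\Phi_{\mathbf{s},L}\Psi_{\mathbf{s},L}$ and $\pi_R(f_{\mathbf{s}})-y=\Phi_{\mathbf{s},R}\Psi_{\mathbf{s},R}$ together with control of which loci the factors of $\Psi_{\mathbf{s},i}$ cut out (this is what the paper's sentence ``it would also satisfy the polynomials $\Phi_{\mathbf{t'},L}$ and $\Phi_{\mathbf{t'},R}$'' is silently asserting). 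Relatedly, your claim that no positive-dimensional component survives ``because $\Phi_{\mathbf{s},L}$ and $\Phi_{\mathbf{s},R}$ were divided out'' is not correct in general: for $\mathbf{t}=(L,R)$ and $\mathbf{s}=(L)$ one has $\Phi_{\mathbf{t},L}=\Phi_{\mathbf{s},L}$ (nothing is removed on the left) and $\Phi_{\mathbf{s},R}=0$, so the locus of $\Phi_{\mathbf{s},L},\Phi_{\mathbf{s},R}$ is a curve contained in the zero set of $\Phi_{\mathbf{t},L}$; finiteness of its intersection with $Y_{\mathbf{t}}$ comes from $\Phi_{\mathbf{t},R}$ not vanishing identically on that curve, not from any division. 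Your Chevalley/semicontinuity paragraph is a reasonable way to make the passage from the generic fibre to specializations precise (the paper elides this, and as you note your version only yields the statement for $(C,D)$ outside a proper closed subset), but the core finiteness claim needs the repair above.
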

\begin{proof}
    A common solution to these polynomials is a vector $\overline{v} = (x,y)$ satisfying \cref{dyn_poly_eqn}. This vector is then necessarily periodic for $f$ of type $\mathbf{t'}$ for some subtype $\mathbf{t'}$ of $\mathbf{t}$. If $\overline{v}$ were in fact periodic of type $\mathbf{t'}$ for some $\mathbf{t'}$ of period $N'<N$, then it would also satisfy the polynomials $\Phi_{\mathbf{t'},L}$ and $\Phi_{\mathbf{t'},R}$. By construction, the four equations coming from the types $\mathbf{t}$ and $\mathbf{t'}$ have finitely many common solutions. Since there are finitely many subtypes of $\mathbf{t}$, the claim follows. 
\end{proof}

We refer to the polynomials $\Phi_{\mathbf{t},i}$ for $i \in \{L,R\}$ as \textbf{modular polynomials}, in analogy to the modular polynomials defining modular curves, which parameterize elliptic curves with torsion structure, and their dynamical analogues in the single-variable quadratic setting as studied in, e.g., \cite{Mor1,WR,Mor2,FPS,Stoll}. In the case of univariate types, the polynomials $\Psi_{\mathbf{t},i}$ come from products of poylnomials $\Phi_{\mathbf{s},i}$ for certain types $\mathbf{s}$ of period strictly less than $N$, as we will soon spell out. 

Let 
\[ Y_{\mathbf{t},i} : \Phi_{\mathbf{t},i} = 0 \]
be the affine variety in $A^2_K$ defined by $\Phi_{\mathbf{t},i}$ for $i \in \{L,R\}$, and let 
\[ Y_{\mathbf{t}} = Y_{\mathbf{t},L} \cap Y_{\mathbf{t},R}. \]

We may specialize by fixing values $c = C$ and $d = D$ in $\Q$, i.e., by fixing our binary quadratic form $f = Cx^2+Dy^2$, to get specializations $\Phi_{f,\mathbf{t},i}$ of $\Phi_{\mathbf{t},i}$ for $i=L,R$. Each of these polynomials either defines a curve in the affine plane $A^2_\Q$ in variables $x$ and $y$, or is $0$ (and hence has solution set all of $\mathbb{A}^2_\Q$). All but (at most) finitely many rational points on the intersection of the two corresponding varieties correspond to rational period $N$ vectors of type $\mathbf{t}$ for $f$. When $\Phi_{f,\mathbf{t},i} \neq 0$, the genus of the curve over $\Q$ defined by $\Phi_{f,\mathbf{t},i}(x,y) = 0$ obtained by any specialization of $c$ and $d$ is generically equal to the genus of the curve $Y_{\mathbf{t},i}$ over the function field $K$. 

As promised, we comment more explicitly on the above factorizations in the univariate case. First, let us consider the special case in which $N' = 0$. For $d$ a positive integer, let $\mathbf{t}_d = (L, \ldots, L)$ be the type of period $d$ consisting of all left replacements. The following factorization follows immediately from the well-known factorization in the single-variable case (see, e.g., \cite[\S 2]{FPS}):
\begin{equation}\label{fact_univariate} \Phi_{\mathbf{t}_N,L} = \prod_{d \mid N} \Phi_{\mathbf{t}_d,L},
\end{equation}
where 
\[ \Phi_{\mathbf{t}_d,L}(x,y) = \prod_{m \mid d} \left(\pi_L(f_{\mathbf{t}_m}(x,y))-x\right)^{\mu(d/m)} \in \Z[c,d,x,y] \]
is known to be an integral polynomial which is irreducible over $\Q(c,d)$. Here, $\mu$ denotes the M{\"o}bius function. These factorizations come from the fact that any point $x$ of period $d$ of a single-variable polynomial with $d \mid N$ satisfies 
\[ f^{(N)}(x) = \left(f^{(d)}\right)^{(N/d)}(x) = x. \]
Suppose more generally that $\mathbf{t}$ is univariate with 
\[ \mathbf{t} = (\underbrace{L,\ldots, L}_{N'},\underbrace{R, \ldots, R}_{N-N'}). \]
As the final $N-N'$ replacements are all on the right, we have 
 \[ \Phi_{\mathbf{t},L} = \Phi_{\mathbf{t}_{N'},L} \; , \] 
whose factorization is given by \cref{fact_univariate}. Let $\mathbf{s}_d$ denote the following type of period $N'+d$:
\[ \mathbf{s}_d := (\underbrace{L,\ldots, L}_{N'},\underbrace{R, \ldots, R}_{d}) \]
By the same reasoning which provides \cref{fact_univariate} and the fact that only the final $N-N'$ replacements are on the right, we have the factorization 
\[ \Phi_{\mathbf{t},R} = \prod_{d \mid (N-N')} \Phi_{\mathbf{s}_d,R} \; , \]
with
\[ \Phi_{\mathbf{s}_d,R}(x,y) = \prod_{m \mid d} \left(\pi_R(f_{\mathbf{s}_m}(x,y))-y\right)^{\mu(d/m)} \in \Z[c,d,x,y] \] 
integral and irreducible over $\Q(c,d)$. 

For the non-univariate types $\mathbf{t}$ which we consider in this paper, we find that the polynomials $\Phi_{\mathbf{t},i}$ for $i \in \{L,R\}$ are irreducible over $\Q(c,d)$ (equivalently, $\Psi_{\mathbf{t},i}$ is constant for $i \in \{L,R\}$). It would be of interest to know whether this is true generally of non-univariate types, but we do not investigate that question in this work. 

We next show that univariate types are more or less handled by the single-variable case of dynamics of quadratic polynomials, as claimed in the introduction. 

\begin{proposition}\label{univariate_prop}
    Let
    \[ \mathbf{t} = (\underbrace{L,\ldots, L}_{N'},\underbrace{R, \ldots, R}_{N-N'}) \]
    be a univariate type of period $N$ with $1 \leq N' \leq N$, and let $f(x,y) = Cx^2+Dy^2$ be a binary quadratic form over $\Q$. 
    \begin{enumerate}
        \item If $N' < N$, then there are no periodic vectors of type $\mathbf{t}$ for $f$. 
        \item Suppose that $N' = N$, such that 
        \[ \mathbf{t} = (L, \ldots, L). \]
        \begin{enumerate}
            \item If $N \in \{4,5\}$, then there are no periodic vectors of type $\mathbf{t}$ for $f$. 
            \item If $N \geq 6$, then \cref{Poonen_conj} implies that there are no periodic vectors of type $\mathbf{t}$ for $f$. 
        \end{enumerate}
    \end{enumerate}
\end{proposition}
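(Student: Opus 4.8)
The plan is to exploit the fact that, for $f(x,y) = Cx^2 + Dy^2$, a left replacement $f_{(L)} \colon (x,y) \mapsto (Cx^2 + Dy^2, y)$ leaves the second coordinate fixed while a right replacement $f_{(R)} \colon (x,y) \mapsto (x, Cx^2 + Dy^2)$ leaves the first coordinate fixed. Consequently, applying the initial block of $N'$ left replacements of $\mathbf{t}$ to a vector $(v_1, v_2)$ keeps the second coordinate equal to $v_2$ throughout and makes the first coordinate evolve by iteration of the single-variable quadratic $g(x) := Cx^2 + Dv_2^2 \in \Q[x]$; that is, this block sends $(v_1, v_2)$ to $(g^{(N')}(v_1), v_2)$. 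The remaining $N - N'$ right replacements fix the first coordinate, so the first coordinate of $f_{\mathbf{t}}(v_1, v_2)$ is always $g^{(N')}(v_1)$, regardless of $N'$.

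For part (1), assume $N' < N$ and that $(v_1, v_2)$ is periodic of type $\mathbf{t}$, so $f_{\mathbf{t}}(v_1,v_2) = (v_1,v_2)$. Comparing first coordinates yields $g^{(N')}(v_1) = v_1$, and hence $f_{\mathbf{t}'}(v_1, v_2) = (g^{(N')}(v_1), v_2) = (v_1, v_2)$ for the subtype $\mathbf{t}' = (\underbrace{L, \ldots, L}_{N'})$ of $\mathbf{t}$. As $\mathbf{t}'$ has period $N' < N$, this contradicts the requirement in \cref{types_def} that no subtype of strictly smaller period fix a type-$\mathbf{t}$ vector, so there are none. (This step uses nothing about the ground field.)

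For part (2) we have $N' = N$ and $\mathbf{t} = (L, \ldots, L)$, so $f_{\mathbf{t}}(v_1,v_2) = (g^{(N)}(v_1), v_2)$ and the subtypes of $\mathbf{t}$ of smaller period are precisely $(\underbrace{L,\ldots,L}_{m})$ with $m < N$, which act by $g^{(m)}$ on the first coordinate. Thus $(v_1, v_2) \in \Q^2$ is periodic of type $\mathbf{t}$ for $f$ exactly when $v_1$ is a rational point of period $N$ (not of smaller period) for the polynomial $g(x) = Cx^2 + Dv_2^2$, which is a genuine quadratic since $C \neq 0$ and is conjugate over $\Q$ to $x^2 + CDv_2^2$ via $x \mapsto Cx$. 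Now I invoke the known period results for quadratic polynomials over $\Q$: Morton's theorem \cite[Thm.~4]{Mor2} for $N = 4$, the Flynn--Poonen--Schaefer theorem \cite[Thm.~1]{FPS} for $N = 5$, and \cref{Poonen_conj} for $N \geq 6$. Since each applies for every value of $v_2 \in \Q$, there is no periodic vector of type $\mathbf{t}$ for $f$ (conditionally on \cref{Poonen_conj} when $N \geq 6$), completing the proof.

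After the key observation above, the argument is essentially bookkeeping, so I do not expect a genuine obstacle; the one place to be careful is aligning the ``no smaller subtype'' clause in the definition of a type-$\mathbf{t}$ periodic vector with the notion of a point of exact period $N$ for the one-variable polynomial $g$, and checking that $g$ stays a bona fide quadratic for every value of the frozen second coordinate.
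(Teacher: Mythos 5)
Your proof is correct and follows essentially the same route as the paper's: both exploit that the trailing right replacements do not alter the left coordinate (so a type-$\mathbf{t}$ vector with $N'<N$ would already be fixed by the subtype $(L,\ldots,L)$ of period $N'$), and both reduce the all-left case to exact period-$N$ points of the univariate quadratic $Cx^2+Dv_2^2$, then cite Morton, Flynn--Poonen--Schaefer, and \cref{Poonen_conj}. Your phrasing in terms of the frozen coordinate and the conjugacy to $x^2+CDv_2^2$ is a slightly more explicit rendering of what the paper states via the modular polynomials $\Phi_{\mathbf{t},L}=\Phi_{\mathbf{s},L}$ and $\Phi_{\mathbf{s},R}=0$, but the substance is identical.
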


\begin{proof}
    \begin{enumerate}
        \item The first $N'$ replacements for type $\mathbf{t}$ vectors are on the left, and all remaining $N-N'$ replacements are on the right. Therefore, letting 
        \[ \mathbf{s} = (\underbrace{L,\ldots, L}_{N' \textnormal{ entries}}), \]
        we note that the left modular polynomial for $\mathbf{t}$ is the same as that for $\mathbf{s}$:
        \[ \Phi_{\mathbf{t},L} = \Phi_{\mathbf{s},L}. \]
        As $\mathbf{s}$ consists of all left replacements, we have $\Phi_{\mathbf{s},R} = 0$. Therefore, any vector satisfying $\Phi_{\mathbf{t},i}$ for $i = L$ and for $i=R$ must have period at most $N' < N$.
        \item In this case, we have that $\Phi_{t,R} = 0$. Considering the polynomial $\Phi_{t,L}$ as a polynomial in the single variable $x$ over the function field $\mathbb{Q}(c,d,y)$, each specialization of $\Phi_{t,L}$ to a triple $(c,d,y) = (C,D,Y) \in \Q^3$ is a univariate modular polynomial whose solutions are generically the period $N$ points of the single variable quadratic polynomial
        \[ f(x) = Cx^2 + DY^2. \]
        The claim then follows from the referenced conjecture that single-variable quadratic polynomials have no periodic points of period at least $4$, which has been proven unconditionally for periods $4$ \cite[Thm. 4]{Mor2} and $5$ \cite[Thm. 1]{FPS}. 
    \end{enumerate}
\end{proof}

Based on \cref{univariate_prop}, the only univariate types that can admit rational periodic vectors are those equivalent to one of $(L), (L,L),$ or $(L,L,L)$. The periodic vectors in these cases will come from rational points on families of univariate dynamical modular curves, for which one can reference prior results in that setting, but we will briefly tackle these cases in \cref{L_section}, \cref{LL_section}, and \cref{LLL_section} for completeness and clarity of this point. Of greater interest to us is consideration of non-univariate types, given the novel attribute of not arising from the single-variable case.


In \cref{types_table}, for a representative of each period type class of period up to $5$ we record the degrees of the corresponding polynomials $\Phi_{f,\mathbf{t},i}$ for $i \in \{L,R\}$. We also record the genus of the affine curve $X_{\mathbf{t},i}$ cut out by $\Phi_{f,\mathbf{t},i}$ over the function field $\mathbb{Q}(c,d)$ for $i \in \{L,R\}$. We refer to the file \texttt{Yt{\_}Nlt5.m} for these computations. 

\begin{center}
\rowcolors{2}{white}{gray!20}
\begin{longtable}{| c | c | c | c | c | c |}\caption{Degrees and (when $\Phi_{\mathbf{t},i} \ne 0$) genera of $Y_{\mathbf{t},L}$ and $Y_{\mathbf{t},R}$ over $K=\mathbb{Q}(c,d)$ for types of period $N \leq 5$.}\label{types_table} \\ \hline
\rowcolor{gray!50} Type $\mathbf{t}$ & Univariate type? & $\textnormal{deg}(\Phi_{\mathbf{t},L})$ & $\textnormal{deg}(\Phi_{\mathbf{t},R})$ & $\textnormal{genus}(Y_{\mathbf{t},L})$ & $\textnormal{genus}(Y_{\mathbf{t},R})$ \\ \hline 
$(L)$ & Yes &$2$ & $0$ & $0$ & -- \\
$(L,L)$ & Yes & $2$ & $0$ & $0$ & -- \\
$(L,R)$ & Yes & $2$ & $4$ & 0 & $1$ \\
$(L,L,L)$ & Yes & $6$ & $0$ & $2$ & --\\
$(L,L,R)$ & Yes & $2$ & $8$ & $0$ & $5$ \\
$(L,L,L,L)$ & Yes & $12$& $0$ & $9$ & -- \\
$(L,L,L,R)$ & Yes & $6$& $16$ & $2$ & $17$ \\
$(L,L,R,R)$ & Yes & $2$ & $8$ & $0$ & $5$ \\
$(L,R,L,R)$ & \textcolor{blue}{No} & $8$ & $16$ & $5$ & $17$\\
$(L,L,L,L,L)$ & Yes & $30$ & $0$ & $49$ & -- \\
$(L,L,L,L,R)$ & Yes & $12$ & $32$ & $9$ & $42$ \\
$(L,L,L,R,R)$ & Yes & $6$ & $16$ & $2$ & $17$ \\
$(L,L,R,L,R)$ & \textcolor{blue}{No} & $16$ & $32$ & $17$ & $49$ \\ \hline
\end{longtable}
\end{center}

In the remainder of this section, we further study the varieties $Y_{\mathbf{t}}$ with an aim to characterize, for diagonal binary quadratic forms $f$ over $\Q$, the periodic vectors of types $\mathbf{t}$ that appear in \cref{types_table} and that are not already covered by \cref{univariate_prop}. In particular, we handle the possible univariate types in the next three subsections, and we consider the non-univariate types $(L,R,L,R)$ and $(L,L,R,L,R)$ in \cref{LRLR_section} and \cref{LLRLR_section}, respectively. 

\subsection{Period 1: self-loops}\label{L_section}

Let $f(x,y) = Cx^2+Dy^2$ be a fixed binary quadratic form over $\Q$. The next proposition states that there are infinitely many vectors $\overline{v} \in \Q^2$ such that the corresponding vertex admits a self-loop in $\mathcal{G}(f,\overline{v})$. These are in correspondence with periodic vectors of type $(L)$ or $(R)$ for $f$. By \cref{tau_bijection_lemma}, it suffices to consider type $\mathbf{t} = (L)$.  

\begin{proposition}\label{self-loops-prop}
 With $f$ as above, there exist infinitely many rational periodic vectors of type $(L)$ for $f$. 
\end{proposition}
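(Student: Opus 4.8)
The plan is to make the type $(L)$ periodicity condition completely explicit and then exhibit an infinite family of rational solutions. A vector $\overline{v} = (a,b) \in \Q^2$ is periodic of type $(L)$ for $f(x,y) = Cx^2+Dy^2$ precisely when $f_{(L)}(a,b) = (a,b)$, i.e.\ when replacing the left entry by $f(a,b)$ returns $(a,b)$. Spelling this out, $f_{(L)}(a,b) = (Ca^2+Db^2, b)$, so the second coordinate equation is automatic and the condition reduces to the single equation
\[ Ca^2 + Db^2 = a. \]
So the type $(L)$ periodic vectors are exactly the rational points on the affine conic $C x^2 + D y^2 - x = 0$ in $\mathbb{A}^2_\Q$ (minus any that happen to have smaller period, but type $(L)$ has period $1$ so there is nothing to exclude). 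This matches the entry $\deg(\Phi_{(L),L}) = 2$, $\deg(\Phi_{(L),R}) = 0$, $\mathrm{genus}(Y_{(L),L}) = 0$ recorded in \cref{types_table}.

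Next I would show this conic has infinitely many rational points. It has the obvious rational point $(a,b) = (0,0)$, and it is a smooth conic (the gradient $(2Ca - 1, 2Db)$ vanishes only at $(1/(2C), 0)$, which does not lie on the curve since $C \cdot 1/(4C^2) = 1/(4C) \ne 1/(2C)$), hence geometrically a smooth conic of genus $0$ with a rational point, so it is rational over $\Q$ and has infinitely many rational points. Concretely, one can just parametrize: substitute $a = tb$ for a parameter $t \in \Q$ into $Ca^2 + Db^2 = a$, obtaining $(Ct^2 + D)b^2 = tb$, so $b = t/(Ct^2+D)$ (for the infinitely many $t \in \Q$ with $Ct^2 + D \ne 0$) and $a = t^2/(Ct^2+D)$. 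This gives an explicit one-parameter family
\[ \left( \frac{t^2}{Ct^2+D},\ \frac{t}{Ct^2+D} \right), \qquad t \in \Q,\ Ct^2 + D \ne 0, \]
of rational periodic vectors of type $(L)$ for $f$, and distinct values of $t$ give distinct vectors (the ratio $a/b = t$ recovers $t$), so there are infinitely many. That completes the proof.

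There is essentially no obstacle here: the only thing to be mildly careful about is that $f$ is a diagonal \emph{form} with $C, D \ne 0$, which is exactly what guarantees the pencil of lines through the origin sweeps out the conic with only finitely many exceptional slopes (those with $Ct^2 + D = 0$), so the parametrization is genuinely infinite. One could alternatively phrase this via \cref{tau_bijection_lemma} to note the symmetric statement for type $(R)$ follows by swapping the roles of $x$ and $y$, i.e.\ the rational points of $Cx^2 + Dy^2 = y$, with parametrization $\left( s/(C+Ds^2),\ s^2/(C+Ds^2) \right)$.
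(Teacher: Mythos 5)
Your proposal is correct and follows essentially the same route as the paper: reduce type $(L)$ periodicity to rational points on the smooth genus-$0$ conic $Cx^2 - x + Dy^2 = 0$, observe that $(0,0)$ is a rational point, and conclude there are infinitely many. Your explicit parametrization by lines through the origin is a nice concrete supplement but not a different argument.
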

\begin{proof}
A rational periodic vector of type $(L)$ for $f$ is a vector $\overline{v} = (a,b) \in \Q^2$ satisfying 
\[ f(\overline{v}) = a. \]
That is, they are rational points on the genus $0$ dynamical modular curve
\[ Y_{f,(L)} = Y_{f,(L),L} : Cx^2 - x + Dy^2 = 0 . \]
This curve is non-singular given that $C \neq 0$, so $Y_{f,(L)}$ is either pointless over $\Q$ or is isomorphic to $\mathbb{P}^1_{\mathbb{Q}}$. We note the trivial rational solution $(0,0)$ exists irrespective of $C$ and $D$. giving the claim. 
\end{proof}

Regarding \emph{integral} type $(L)$ vectors for $f$, we have the following result when $f$ is an integral form. 

\begin{proposition}
    \label{self_loops_integral}
Let $f = Cx^2+Dy^2$ be as above with $C,D \in \Z$. 
\begin{itemize}
    \item If $f$ is definite, then the only integral periodic vector of type $(L)$ for $f$ is $(0,0)$ unless $C = \pm 1$, in which case we also have $(\pm 1,0)$.
    \item If $f$ is indefinite, then the integral periodic $(L)$ vectors for $f$ are in one-to-one correspondence with integer solutions to the Pell equation 
    \[ X^2 - 4|CD|Y^2 = 1\]
\end{itemize}
\end{proposition}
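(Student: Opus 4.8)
The plan is to reinterpret a type $(L)$ vector for $f$ as an integral point on the conic appearing in \cref{self-loops-prop} and to split the analysis according to the sign of $CD$. By \cref{tau_bijection_lemma} it suffices to treat the type $\mathbf{t} = (L)$, and $(a,b) \in \Z^2$ is such a vector for $f$ exactly when
\[ Ca^2 - a + Db^2 = 0. \]
When $f$ is definite the real locus of this equation is an ellipse, so one expects only finitely many integral points and a crude bound should finish; when $f$ is indefinite the real locus is a hyperbola of Pell type, and completing the square will convert integral points into solutions of the asserted Pell equation.

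For the definite case, after possibly multiplying the equation by $-1$ and replacing $a$ by $-a$ we may assume $C, D > 0$. Then $Db^2 = a(1 - Ca) \ge 0$ forces the product $a(1-Ca)$ to be nonnegative, hence $0 \le a \le 1/C$; since $C$ is a positive integer this leaves only $a = 0$, together with $a = 1$ in the case $C = 1$. In either case the equation then forces $b = 0$ because $D \ne 0$. Tracking the sign changes back gives the stated list: $(0,0)$ always, and additionally the vector $(\pm 1, 0)$ (with sign matching that of $C$) precisely when $C = \pm 1$.

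For the indefinite case $CD < 0$, multiplying through by $4C$ and completing the square gives
\[ (2Ca - 1)^2 - 4|CD|\, b^2 = 1, \]
so $(a,b) \mapsto (X,Y) := (2Ca - 1,\, b)$ sends an integral type $(L)$ vector to an integer solution of $X^2 - 4|CD|Y^2 = 1$; this map is injective since $a = (X+1)/(2C)$ and $b = Y$ are determined by $(X,Y)$. Conversely, given a Pell solution $(X,Y)$ with $2C \mid X+1$, the pair $a := (X+1)/(2C)$, $b := Y$ is integral, and running the displayed identity backwards, namely $4C(Ca^2 - a + Db^2) = (2Ca-1)^2 - 4|CD|b^2 - 1 = 0$, shows it is a type $(L)$ vector. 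What remains, and what I expect to be the main obstacle, is to identify the image of this map with the full Pell solution set as the proposition asserts: this comes down to checking that (up to the symmetry $X \mapsto -X$ of the Pell equation) the divisibility $2C \mid X+1$ holds for every solution, which I would attack by reducing $X^2 - 1 = 4|CD|Y^2$ modulo $4C$ and analyzing the resulting quadratic congruence on $X$ via the Chinese Remainder Theorem. The forward map, its injectivity, and the entire definite case are routine, so this congruence bookkeeping in the indefinite case is where essentially all of the content lies.
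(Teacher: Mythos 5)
Your definite case is complete and correct, and is essentially the paper's argument in different clothing (the paper solves the quadratic for $x$ and observes that $X^2+4CDY^2=1$ with $CD>0$ forces $X$ or $Y$ to vanish; your bound $0\le a\le 1/C$ does the same job more directly). The indefinite case, however, contains a genuine gap, and it is exactly the one you flagged: you construct the injection $(a,b)\mapsto (2Ca-1,\,b)$ into the Pell solution set but leave surjectivity unproved, proposing to show by a congruence analysis modulo $4C$ that, up to $X\mapsto -X$, every solution satisfies $2C\mid X+1$. That divisibility is false in general, so the proposed attack cannot succeed. Take $C=6$, $D=-1$: then $(X,Y)=(5,1)$ satisfies $X^2-24Y^2=1$, yet neither $X+1=6$ nor $-X+1=-4$ is divisible by $2C=12$, so the candidate values $a=1/2$ and $a=-1/3$ are not integral and no integral type $(L)$ vector maps to $(\pm 5,\pm 1)$. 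The structural reason is visible in your own setup: writing $X-1=2s$ and $X+1=2t$ one has $\gcd(s,t)=1$ and $st=|CD|Y^2$, which forces $C\mid st$ but allows a composite $C$ to split between the coprime factors $s$ and $t$ (here $s=2$, $t=3$, $C=6$).

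Consequently the image of your injection is the set of solutions with $X\equiv -1\pmod{2C}$, which is in general a proper subset of all integer solutions, and the asserted one-to-one correspondence fails as written. You should not feel you merely ran out of time on routine bookkeeping: the paper's own proof asserts at the same point that $1\pm\sqrt{1-4CDy^2}$ is a multiple of $2C$ for exactly one choice of sign, and the example above contradicts that assertion as well, so you have located a real defect in the proposition rather than a gap only in your write-up. The statement should be repaired by building the congruence condition $X\equiv -1\pmod{2C}$ into the correspondence; the subsequent corollary on infinitude survives, since when $|CD|$ is not a square the solutions in that congruence class are still infinite in number (for instance, translate the finite-index subgroup of norm-one units congruent to $1$ modulo $2C$ by the solution $(-1,0)$).
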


\begin{proof}
Similar to as in \cref{self-loops-prop}, we are now looking for $\overline{v} = (x,y) \in \Z^2$ satisfying 
    \[ Cx^2 - x + Dy^2 = 0. \]
We then have 
\begin{equation}\label{Pell_solution} 
x = \dfrac{ 1 \pm \sqrt{1 - 4CDy^2}}{2C}
\end{equation}
for some choice of the sign in the numerator. Because $x$ is an integer, the quantity $1-4CDy^2$ must be a perfect square. Let $n$ be the integer satisfying 
\[ n^2 + 4CDy^2 = 1 .\]
This provides an integral solution $(X,Y) = (n,y)$ to the equation 
\[ X^2 + 4CDY^2 = 1. \]
If $C$ and $D$ have the same sign, then it follows that $X$ or $Y$ is zero and hence the only solutions are the ones of the theorem statement. 

Suppose then that we are in the indefinite case, such that $CD = -|CD|$. Note that given a pair $(n,y)$ we obtain a unique corresponding $x \in \mathbb{Z}$ such that $\overline{v}$ is periodic of type $(L)$ for $f$ via \cref{Pell_solution}: given that $1-4CDy^2$ is a square, and noting that
\[ 1-4CDy^2 \equiv  1 \pmod{2C}, \]
we see that $1 \pm \sqrt{1-4CDb^2}$ is a multiple of $2C$ for exactly one choice of the sign. Our integral period $(L)$ vectors $\overline{v}$ for $f$ are indeed then in correspondence with integral solutions to the Pell equation 
\[ n^2 - 4|CD|y^2 = 1. \]
\end{proof}

\begin{corollary}
    Let $f = Cx^2+Dy^2$ be an indefinite, binary quadratic form with $C,D \in \Z$. There are infinitely integral periodic vectors of type $(L)$ for $f$ if and only if $CD$ is not a perfect square. If $CD$ \emph{is} a perfect square, then the only possible integral periodic vector of type $(L)$ other than $(0,0)$ is $(\pm 1, 0)$ which occurs when $C = \pm 1$. 
\end{corollary}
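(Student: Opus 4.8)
The plan is to reduce everything to the classification of solutions of the Pell equation furnished by \cref{self_loops_integral}. By that proposition, the integral periodic vectors of type $(L)$ for $f$ are in bijection with the integer solutions $(X,Y)$ of
\[ X^2 - 4|CD|Y^2 = 1. \]
So I would first recall the classical dichotomy for the equation $X^2 - nY^2 = 1$ with $n$ a positive integer: if $n$ is a perfect square, then factoring as $(X - \sqrt{n}\,Y)(X + \sqrt{n}\,Y) = 1$ over $\Z$ forces $Y = 0$ and $X = \pm 1$, so the only solutions are $(X,Y) = (\pm 1, 0)$; if $n$ is not a perfect square, then the classical theory of the Pell equation produces infinitely many solutions, obtained by powering a fundamental solution. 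Applying this with $n = 4|CD|$, which is a perfect square precisely when $|CD|$ is, settles the ``infinitely many'' assertion at once.

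For the remaining claim I would trace the solutions $(X,Y) = (\pm 1, 0)$ back through the correspondence of \cref{self_loops_integral}. When $|CD|$ is a perfect square the only admissible second coordinate is $y = Y = 0$, and then \cref{Pell_solution} gives $x = (1 \pm 1)/(2C)$, i.e.\ $x = 0$ or $x = 1/C$. The value $x = 0$ always yields the trivial vector $(0,0)$, while $x = 1/C$ is an integer exactly when $C = \pm 1$, in which case it yields $(\pm 1, 0)$. This exhausts the integral periodic vectors of type $(L)$ in the perfect-square case.

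There is essentially no serious obstacle here: the statement is a bookkeeping consequence of \cref{self_loops_integral} together with standard Pell theory. The only point requiring a little care is the mild abuse in the hypothesis ``$CD$ is a perfect square'': since we are in the indefinite case $CD < 0$, this should be read as the condition that $|CD|$ (equivalently, that $4|CD|$) is a perfect square, and I would phrase it that way in the write-up to avoid ambiguity.
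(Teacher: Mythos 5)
Your proposal is correct and follows essentially the same route as the paper: invoke the standard dichotomy for the Pell equation $X^2 - EY^2 = 1$ with $E = 4|CD|$ (only trivial solutions when $E$ is a square, infinitely many otherwise) and then trace the trivial solutions back through the correspondence of \cref{self_loops_integral}. Your added remarks — spelling out that $y=0$ forces $x=0$ or $x=1/C$, and that the hypothesis should really be read as ``$|CD|$ is a perfect square'' since $CD<0$ in the indefinite case — are accurate clarifications of points the paper leaves implicit.
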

\begin{proof}
    If $E \in \mathbb{Z}^+$ is a perfect square, then the Pell equation $X^2 - EY^2 = 1$ has only the trivial solutions $(X,Y) = (\pm 1, 0)$. If $E$ is not a perfect square, then this equation has infinitely many solutions, corresponding to norm $1$ units in $\mathbb{Z}[\sqrt{E}]$. (See, e.g., \cite[\S 20]{Dudley}.) The claim then follows quickly from the proof of \cref{self_loops_integral}. 
\end{proof}

\subsection{Period 2}\label{LL_section}

By part (1) of \cref{univariate_prop}, we know that there does not exists a quadratic form $f(x,y) = Cx^2+Dy^2$ over $\Q$ with a rational periodic vector of type $(L,R)$. The only remaining period $2$
type class to consider is then that of the univariate type $\mathbf{t} = (L,L)$.

In this case, we have 
\begin{equation}\label{LL_equation}
\Phi_{(L,L),L} = \dfrac{\pi_R(f_{(L,L)}-x)}{\Phi_{(L),L}} = c^2x^2 + cx + cdy^2+1. 
\end{equation}

The curve $Y_{(L,L)} = Y_{(L,L),L}$ over $K = \mathbb{Q}(c,d)$ is non-singular of genus $0$, as is its specialization $Y_{f,(L,L)}$ to any $f = Cx^2+Dy^2$ with $C,D \in \Q^\times$. 

\begin{proposition}\label{LL_prop}
Let $f(x,y) = Cx^2+Dy^2$ be a fixed binary quadratic form over $\Q$. Then $f$ has a rational periodic vector of type $(L,L)$ (and hence has infinitely many) if and only if there exist rational numbers $m,n$ such that 
\[ CD = -\frac{n^2+3}{4m^2}. \]
\end{proposition}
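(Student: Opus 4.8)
The curve $Y_{f,(L,L)}$ is the conic in $\A^2_\Q$ cut out by the equation $C^2x^2 + Cx + CDy^2 + 1 = 0$ obtained by specializing \cref{LL_equation}. Since it is non-singular of genus $0$, it is isomorphic to $\P^1_\Q$ as soon as it has a single rational point, and in that case it has infinitely many; so the entire content of the proposition is the characterization of when a rational point exists, together with the observation that we must check no such point lies on a locus where the associated vector fails to genuinely have type $(L,L)$ rather than type $(L)$ — but a type $(L)$ vector satisfies $Cx^2 - x + Dy^2 = 0$, and combined with the conic equation this forces a finite set which will be seen to be empty (or traceable back to $(0,0)$, which is not on $Y_{f,(L,L)}$ for $C \ne 0$). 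So the crux is a solubility criterion for the conic.

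First I would complete the square in $x$: multiplying through by $4$ and rearranging, the equation $C^2x^2 + Cx + CDy^2 + 1 = 0$ becomes $(2Cx + 1)^2 = 1 - 4CDy^2 - 4 = -3 - 4CDy^2$. Setting $X = 2Cx + 1$, a rational point on $Y_{f,(L,L)}$ is therefore the same datum as a rational solution $(X,y)$ to
\[ X^2 + 4CDy^2 = -3. \]
(The map $x \mapsto X = 2Cx+1$ is an affine-linear isomorphism over $\Q$ since $C \ne 0$, so this loses nothing.) Now I would invoke the classical local-global / descent analysis of the ternary conic $X^2 + 4CDy^2 + 3z^2 = 0$; but rather than going through Hasse–Minkowski in full, the cleanest route is to directly exhibit the stated parametrization. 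If $CD = -\tfrac{n^2+3}{4m^2}$ for some $m, n \in \Q$ (we may clear denominators so that this is the shape of the claim), then $(X,y) = (n, m)$ solves $X^2 + 4CDy^2 = n^2 - (n^2+3) = -3$, giving a rational point and hence infinitely many. Conversely, given any rational solution $(X,y)$, necessarily $y \ne 0$ (else $X^2 = -3$ has no rational solution), and then $4CD = (-3 - X^2)/y^2 = -(X^2+3)/y^2$, which is exactly of the form $-\tfrac{n^2+3}{4m^2}$ upon writing $y = m$, $X = n$ and absorbing the factor of $4$: explicitly $CD = -\tfrac{X^2+3}{4y^2} = -\tfrac{n^2+3}{4m^2}$ with $n = X$, $m = y$. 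This is an equivalence, and it is visibly independent of the individual values of $C$ and $D$ — only the product $CD$ enters, consistent with the statement.

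The one genuinely substantive step is verifying that a rational point on the conic really does yield a periodic vector \emph{of type exactly} $(L,L)$, i.e. not one of strictly smaller period. By the discussion preceding \cref{univariate_prop} and the construction of the modular polynomials, $\Phi_{(L,L),L}$ was obtained precisely by dividing out the type $(L)$ factor $\Phi_{(L),L} = Cx^2 - x + Dy^2$, so a common solution of $\Phi_{(L,L),L}$ and $\Phi_{(L,L),R}$ that also had period $1$ would have to satisfy both $\Phi_{(L),L}$ and the already-divided polynomial; one checks these have no common rational zero (equivalently, the resultant is a nonzero constant in $\Q(c,d)$ upon specialization, away from the degenerate loci $C = 0$ or $D = 0$ excluded by hypothesis). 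Since $\Phi_{(L,L),R} = 0$ here (all replacements on the left), the only condition is the conic, and this finite-exceptional-set check disappears. I expect this bookkeeping — confirming genuineness of the period — to be the only place requiring care; the conic solubility itself is the short explicit computation above.
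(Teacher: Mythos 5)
Your argument is essentially the paper's: reduce to the conic $\Phi_{f,(L,L),L}=0$ (you complete the square to get $X^2+4CDy^2=-3$ with $X=2Cx+1$, the paper applies the quadratic formula in $x$; these are the same computation), read off the criterion $CD=-\tfrac{n^2+3}{4m^2}$, and use non-singularity of the conic to pass from one rational point to infinitely many. That core is correct.

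One side-claim you make is false, though the proof survives. You assert that $\Phi_{(L),L}=Cx^2-x+Dy^2$ and $\Phi_{(L,L),L}=C^2x^2+Cx+CDy^2+1$ have no common rational zero. In fact $\Phi_{(L,L),L}-C\,\Phi_{(L),L}=2Cx+1$, so common zeros are exactly $x=-\tfrac{1}{2C}$, $y^2=-\tfrac{3}{4CD}$, and these are rational precisely when $CD=-\tfrac{3}{4m^2}$ (the $n=0$ case of your own criterion); e.g.\ for $C=1$, $D=-\tfrac34$ the point $(-\tfrac12,\pm1)$ lies on both curves and is a fixed point, not a type $(L,L)$ vector. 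Correspondingly, the point $(X,y)=(n,m)$ you exhibit is a genuine type $(L,L)$ vector only when $n\neq 0$. The repair is immediate: for each fixed $(C,D)$ the common zero locus is finite (at most two points, those with $X=0$), so once the conic has one rational point it has infinitely many, all but finitely many of which are of type exactly $(L,L)$; this gives both directions of the equivalence. The paper leaves this bookkeeping implicit; your instinct to address it is right, but the claim should be ``finitely many common zeros,'' not ``none.''
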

\begin{proof}
From \cref{LL_equation}, a point $(x,y)$ on the genus $0$ curve $Y_{f,(L),L}$ is given by 
\[ x = \dfrac{-1 \pm \sqrt{-(4CDy^2+3)}}{2C} .\]
So, $x \in \Q$ if and only if $-(4CDy^2+3)$ is a rational square. The vector $(x,y) \in \Q^2$ is then rational and of type $(L,L)$ if and only if we have $n \in \Q$ with
\[ CD = -\frac{n^2+3}{4y^2}. \]
Having such a point is then equivalent to having infinitely many, as $Y_{f,(L),L}$ is a non-singular conic over $\Q$. 

The above can also be quickly gleaned from the result \cite[Thm. 1]{WR} in the single-variable case (see also the restatement as \cite[Thm. 1.1]{FPS}), which uses the same line of argument. A periodic vector $(x,Y)$ of type $(L,L)$ on $f$ corresponds to a period $2$ point $x$ on the single variable polynomial 
\[ g(x) := Cx^2+DY^2, \]
treating $Y$ as fixed, which is equivalent up to conjugation by a linear polynomial to $h(x) = x^2+CDY^2$. Applying the result of Walde--Russo then completes the proof. 
\end{proof}

\subsection{Period 3}\label{LLL_section}

By part (1) of \cref{univariate_prop}, we know that there does not exist a quadratic form $f(x,y) = Cx^2+Dy^2$ over $\Q$ with a rational periodic vector of type $(L,L,R)$. The only remaining period $3$ type class to consider is then that of the univariate type $\mathbf{t} = (L,L,L)$. 

\begin{proposition}\label{LLL_prop}
    Let $f(x,y) = Cx^2+Dy^2$ be a fixed binary quadratic form over $\Q$. Then $f$ has a rational periodic vector of type $(L,L,L)$ if and only if there exist rational numbers $\tau,n$ with $\tau \not \in \{-1,0\}$ such that 
\[ CD = -\frac{\tau^6+2\tau^5+4\tau^4+8\tau^3 + 9\tau^2+4\tau+1}{4 n^2 \tau^2(\tau+1)^2}. \]
In this case, for each such pair $(\tau,n)$ there are three periodic vectors $(x,y)$ with $y = n$ of type $(L,L,L)$ which are cyclically permuted by $f_L$. 
\end{proposition}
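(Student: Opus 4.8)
The plan is to mimic the strategy used for type $(L,L)$ in \cref{LL_prop}: reduce to the single-variable setting and invoke the known classification of rational period $3$ points for quadratic polynomials. A periodic vector $(x,y)$ of type $(L,L,L)$ for $f$ is, by definition, a solution of $f_{(L,L,L)}(x,y) = (x,y)$ which is not fixed by any proper subtype. Since all three replacements are on the left, the $R$-coordinate never changes, so if we write $y = n$ then $x$ runs through the period $3$ orbit of the single-variable polynomial
\[ g(x) := Cx^2 + Dn^2. \]
Conjugating by the linear map $x \mapsto x/C$ (equivalently, the standard normalization recalled in \cref{fact_univariate} and \cite[\S 2]{FPS}), $g$ is linearly conjugate to $h(x) = x^2 + c_0$ with $c_0 = CDn^2$, and period $3$ points are preserved under this conjugation. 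The three points of the orbit are then cyclically permuted by $f_L$ (equivalently by $g$), which gives the final sentence of the statement once we have produced one such orbit.

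Next I would invoke Morton's classification \cite{Mor1} of rational period $3$ points of $h(x) = x^2 + c_0$: these exist precisely when $c_0$ lies in a one-parameter rational family, namely $c_0 = -(\tau^2+\tau+1)(\tau^2-\tau-1)/(2\tau)^2 \cdot(\ldots)$ — more precisely, the period $3$ locus is parameterized by a rational curve, and one has a rational parameterization $c_0 = c_0(\tau)$ with $\tau$ ranging over $\Q \setminus \{0,-1\}$ (the excluded values being where the parameterization degenerates or the orbit collides). Substituting $c_0 = CDn^2$ and solving for $CD$ yields
\[ CD = \frac{c_0(\tau)}{n^2}, \]
and it remains only to check that $c_0(\tau)$, written over a common denominator, equals
\[ -\frac{\tau^6+2\tau^5+4\tau^4+8\tau^3+9\tau^2+4\tau+1}{4\tau^2(\tau+1)^2}. \]
This is a direct computation: expand Morton's parameterization, clear denominators, and match. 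Conversely, given rationals $\tau \notin\{-1,0\}$ and $n$ with $CD$ of the stated form, one sets $c_0 = CDn^2 = c_0(\tau)$, obtains a rational period $3$ orbit of $h$, transports it back through the linear conjugacy to a period $3$ orbit $\{x_1,x_2,x_3\}$ of $g$, and checks that $(x_i, n)$ is genuinely of type $(L,L,L)$ and not of a proper subtype — this is automatic since a point of exact period $3$ for $g$ cannot be fixed by $f_{(L)}$ (exact period $1$) and the only other proper subtype $(L,L)$ would force period $2 \mid 3$, hence period $1$, a contradiction.

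The main obstacle is bookkeeping rather than conceptual: one must (i) pin down the exact rational parameterization of the period $3$ locus from \cite{Mor1} (there are several cosmetically different forms in the literature, e.g.\ in \cite{WR}), (ii) carry out the linear conjugation correctly so that the parameter $\tau$ in the statement matches the one appearing in Morton's curve, and (iii) verify the polynomial identity matching $c_0(\tau)$ with the degree $6$ numerator above. A minor additional point is to confirm that the excluded set $\{-1,0\}$ for $\tau$ is exactly the right one — i.e.\ that every other $\tau$ genuinely yields a point of exact period $3$ (no further collisions), which follows from Morton's analysis of the period $3$ dynatomic curve. I would also remark, as in \cref{LL_prop}, that existence of one rational periodic vector of this type forces infinitely many, since $\tau$ and $n$ range over infinite sets and distinct $(\tau, n)$ generically give distinct forms $f$; but for a \emph{fixed} $f$ the set of such vectors is finite, cut out by $\Phi_{f,(L,L,L),L} = 0$, so this last remark is about the family, not a fixed form, and I would phrase it accordingly.
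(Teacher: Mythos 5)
Your proposal is correct and takes essentially the same route as the paper: reduce to the single-variable polynomial $h(x) = x^2 + CDn^2$ via linear conjugation and invoke the known rational parameterization of the period-$3$ locus of $x^2+c_0$, with the exactness of the type following from the exactness of the period. The paper cites \cite[Thm.~3]{WR} (restated as \cite[Thm.~1.3]{FPS}) rather than \cite{Mor1}, and that reference gives $c_0(\tau)$ in precisely the form appearing in the statement, so the polynomial-identity matching you defer is already contained in the cited result.
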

\begin{proof}
    Similar to as in the proof of \cref{LL_prop}, we note that a $(x,Y)$ is a periodic vector of type $(L,L,L)$ for $f$ if and only if $x$ is a period $3$ point of the single-variable quadratic polynomial 
    \[ h(x) = x^2+CDY^2. \]
    By \cite[Thm. 3]{WR} (see also \cite[Thm. 1.3]{FPS}), the polynomial $h(x)$ has a period $3$ orbit consisting of rational numbers if and only if there is a rational number $\tau$ as in the statement of this proposition with 
    \[ CDY^2 = -\frac{\tau^6+2\tau^5+4\tau^4+8\tau^3 + 9\tau^2+4\tau+1}{4\tau^2(\tau+1)^2}. \]
    Our claim then follows from this correspondence and the referenced results. 
    
\end{proof}

\subsection{Period 4}\label{LRLR_section}

By parts (1) and (2)(a) of \cref{univariate_prop}, we know that there does not exist a quadratic form $f(x,y) = Cx^2+Dy^2$ over $\Q$ with a rational periodic vector of type $(L,L,L,L),$ $(L,L,L,R),$ or $(L,L,R,R)$. The only remaining period $4$ type class to consider is then that of the non-univariate type $\mathbf{t} = (L,R,L,R)$. 

In this case, we find that the polynomials 
\begin{align*} 
\Phi_{(L,R,L,R),L}(x,y) &= \pi_L(f_{(L,R,L,R)}(x,y))-x \quad \textnormal{ and } \\
\Phi_{(L,R,L,R),R}(x,y) &= \pi_R(f_{(L,R,L,R)}(x,y))-y
\end{align*}
are irreducible over $\mathbb{Q}(c,d)$ and hence are of degrees $8$ and $16$, respectively. The variety 
\[ Y_{(L,R,L,R)} = Y_{(L,R,L,R),L} \cap Y_{(L,R,L,R),R} \]
is then zero dimensional, and we may use Magma to compute its irreducible components:
\[ Y_{(L,R,L,R)}  = P_0 \cup P_{LR} \cup V. \]
Here, $P_0 = (0,0)$ and $P_{LR} = (\frac{1}{c+d},\frac{1}{c+d})$ correspond to the trivial vectors of period $1$, which are both of type $L$ and of type $R$, and $V$ is the zero-dimensional variety over $\Q(c,d)$ defined by the two equations
\begin{align}\label{LRLR_poly}
    0 = R(y) &:= 16c^2d^3(d-c)^2 y^4 + 8cd^2(d+c)(d-c)^2y^3 \\
    & \quad + (d^4+6cd^3+12c^2d^2-2c^3d-c^4)(d-c)y^2 \nonumber \\ 
    & \quad + (d^3+7cd^2-3c^2d-c^3)(d-c)y + (d^3-2cd^2+5c^2d+c^3). \nonumber
\end{align}
and 
\[ x = G(y), \]
where $G(y) \in \Q(c,d)[y]$ is an explicit cubic polynomial in $y$ over $\Q(c,d)$. These computations are handled in the file \texttt{LRLR.m}. We then see that a fixed $f(x,y) = Cx^2+Dy^2$ has a rational periodic vector of type $(L,R,L,R)$ if and only if the specialization $R_{C,D}(y)$ of $R(y)$ to $(c,d) = (C,D)$ has a rational root.

\begin{theorem}\label{LRLR_thm}
    A binary quadratic form $f(x,y) = Cx^2+Dy^2$ over $\Q$ has at most $4$ periodic vectors of type $(L,R,L,R)$ over $\C$. Such vectors exist if and only if $C \ne D$, and such vectors are in one-to-one correspondence with the roots of $R_{C,D}(y)$.

    In particular, the coordinates of a periodic vector of type $(L,R,L,R)$ for $f$ generate a number field of degree at most $4$, and $f$ has a rational periodic vector of this type over a number field $K$ if and only if $R_{C,D}(y)$ has a rational root over $K$. 
\end{theorem}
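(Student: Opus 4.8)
Building on the decomposition $Y_{(L,R,L,R)} = P_0 \cup P_{LR} \cup V$ recorded above, I would assemble the proof from three pieces: (i) that $V$ is exactly the locus of \emph{genuine} type-$(L,R,L,R)$ vectors, (ii) that $R_{C,D}(y)$ has degree $4$ precisely when $C\ne D$, and (iii) the structural consequences. The preliminary computational input for (i) — that $\pi_L(f_{(L,R,L,R)}(x,y))-x$ and $\pi_R(f_{(L,R,L,R)}(x,y))-y$ are irreducible over $K=\Q(c,d)$ of degrees $8$ and $16$, so that $Y_{(L,R,L,R)}$ is zero-dimensional, and the primary decomposition that produces $P_0=(0,0)$, $P_{LR}=(\tfrac1{c+d},\tfrac1{c+d})$ and $V:\{R(y)=0,\ x=G(y)\}$ with $R(y)$ as in \cref{LRLR_poly} — is carried out in Magma (file \texttt{LRLR.m}).

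For (i): the points $P_0$ and $P_{LR}$ satisfy $f(0,0)=0$ and $f(\tfrac1{c+d},\tfrac1{c+d})=\tfrac1{c+d}$, hence are periodic of type $(L)$ (and of type $(R)$), so they are \emph{not} of type $(L,R,L,R)$ and must be discarded. Conversely I would verify that no point of $V$ has smaller period: writing $f_{(L,R,L,R)}=f_{(R)}\circ f_{(L)}\circ f_{(R)}\circ f_{(L)}$ and setting $a_1=cx^2+dy^2$, $b_1=ca_1^2+dy^2$, $a_2=ca_1^2+db_1^2$, $b_2=ca_2^2+db_1^2$, the type equation becomes $a_2=x$, $b_2=y$; a short direct calculation with the $a_i,b_i$ shows that any solution of this system which is also fixed by one of the proper subtypes $(L)$, $(L,R)$, $(L,R,L)$ is forced to equal $P_0$ or $P_{LR}$. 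Since over $K$ the component $V$ is disjoint from $P_0$ and $P_{LR}$, and a gcd/resultant check (the relevant coefficients being quasi-homogeneous in $(y,c,d)$) shows this persists under every specialization $c=C$, $d=D$ with $C,D\in\Q^\times$ — the finitely many pairs where a denominator of $G$ vanishes being checked by hand — the points of $V$, and after specialization of $V_{C,D}=\{R_{C,D}(y)=0,\ x=G_{C,D}(y)\}$, are exactly the type-$(L,R,L,R)$ vectors for $f$, in bijection via $y\mapsto(G_{C,D}(y),y)$ with the roots of $R_{C,D}(y)$.

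For (ii) and (iii): the leading coefficient of $R(y)$ is $16c^2d^3(d-c)^2$, so $R_{C,D}$ has degree exactly $4$ when $C\ne D$ (using $C,D\ne 0$), while $R_{C,D}(y)$ collapses to the nonzero constant $5D^3$ when $C=D$ (every other coefficient of $R$ carries a factor $d-c$). Hence $f$ has at most $4$ periodic vectors of type $(L,R,L,R)$ over $\C$, and at least one such vector exists exactly when $C\ne D$; and since $G_{C,D}(y)\in\Q[y]$, the coordinates of such a vector lie in $\Q(y_0)$ for a root $y_0$ of the degree-$\le 4$ polynomial $R_{C,D}$, so they generate a number field of degree at most $4$, with $f$ admitting a rational periodic vector of this type if and only if $R_{C,D}(y)$ has a rational root.

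The main obstacle I expect is the computation underlying the decomposition — forming the degree-$8$ and degree-$16$ modular polynomials, proving their irreducibility over the rational function field $K$, and computing the primary decomposition of their zero-dimensional intersection — which is heavy enough to require a computer algebra system. The only genuinely conceptual care-point is (i): confirming that $P_0$ and $P_{LR}$ remain the \emph{only} degenerate solutions after every specialization, so that the correspondence with roots of $R_{C,D}$ is literally a bijection and not merely generic behaviour; this is settled by the finite subtype calculation and the gcd check above, and the finer bookkeeping of the degenerate specializations is what is reflected in the ``generic'' qualifications of \cref{main_theorem}.
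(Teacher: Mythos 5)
Your proposal is correct and follows essentially the same route as the paper: both rest on the Magma decomposition $Y_{(L,R,L,R)} = P_0 \cup P_{LR} \cup V$, the observation that every coefficient of $R(y)$ except the constant term $a_0$ (which specializes to $5D^3 \ne 0$) carries a factor of $d-c$, and the bijection $y \mapsto (G_{C,D}(y),y)$. If anything you are more explicit than the paper's brief proof about ruling out subtype degeneracies and component collisions under specialization, but this is elaboration of the same argument rather than a different one.
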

\begin{proof}
    This follows immediately from the discussion above. Specifically, given a root $y$ of $R_{C,D}(y)$ we obtain the periodic vector $(G_{C,D}(y),y)$ for $f$, where $G_{C,D}$ denotes the specialization of $G(y)$ to $f$. Note that we are already assuming that $C,D \ne 0$ throughout this work, so the condition $C \ne D$ is indeed the only one needed to guarantee periodic vectors of this type. Moreover, this condition truly guarantees vectors of \emph{exactly} this type, and not of a lower type, as we must have $C = D$ for replacements on the left and right to match. When $C=D$, note from \cref{LRLR_poly} that we indeed get no solutions. 
\end{proof}

From \cref{LRLR_thm}, it is natural to ask for which $(C,D) \in \Q^2$ with $(C,D) \ne (0,0)$ and $C \ne D$ one gets each possible factorization type of $(1,1,1,1), (1,1,2), (1,3), (2,2),$ and $(4)$ for $R_{C,D}(y)$. An argument using Hilbert's Irreducibility theorem provides that the behavior of irreducibility of $R(y)$ over $\Q(c,d)$ is exhibited by most (formally: outside of a thin set in $\Q^2$ of) specializations, and one may ask if the types other than $(4)$ occur at all. 

The answer is yes for the factorization type $(2,2)$. The following two examples show the generic behavior of factorization type $(4)$ and the less common factorization type $(2,2)$, respectively. 

\begin{example}
    Let $f(x,y) = \frac{x^2-y^2}{4}$. In this case $R_{1/4,-1/4}$ is irreducible over $\Q$ and we have
    \[ Y_{f,(L,R,L,R)} = P_0 \cup P_{LR} \cup V, \]
where $V$ is the degree $4$ point over $\Q$ given by 
\begin{align*} V: 14x &= - 3y^3 + 4y^2 - 8y - 28, \\
    0&= y^4 + 4y^2 + 16y + 28.
\end{align*}
Letting $\alpha = \sqrt{\sqrt{2}-2}$, the periodic vectors of type $(L,R,L,R)$ for $f$ consist of four vectors which are all rational over the quartic number field $\Q(\alpha)$. One cycle containing two of these vectors, along with two vectors of type $(R,L,R,L)$ is given below:
\[\begin{tikzcd}
	{\left( -\sqrt{2}-2\alpha-\sqrt{2}\alpha, \sqrt{2}\cdot (-1+\alpha)\right)} & {\left( -\sqrt{2}-2\alpha-\sqrt{2}\alpha, \sqrt{2}\cdot (-1-\alpha)\right)} \\
	{\left(-\sqrt{2}+2\alpha+\sqrt{2}\alpha, \sqrt{2}\cdot (-1+\alpha)\right)} & {\left( -\sqrt{2}+2\alpha+\sqrt{2}\alpha, \sqrt{2}\cdot (-1-\alpha) \right)} \\
	{}
	\arrow["{f_L}", from=1-1, to=2-1]
	\arrow["{f_R}", from=1-2, to=1-1]
	\arrow["{f_R}", from=2-1, to=2-2]
	\arrow["{f_L}", from=2-2, to=1-2]
\end{tikzcd}\]
\end{example}

\begin{example}
    Let $f(x,y) = x^2+6y^2$. In this case, $R_{1,6}$ has factorization type $(2,2)$ over $\Q$ and we have
    \[ Y_{f,(L,R,L,R)} = P_0 \cup P_{LR} \cup V_{2,1} \cup V_{2,2}, \]
where each of $V_{2,i}$ for $i = \{1,2\}$ is a degree $2$ point over $\Q$. Specifically, they are given by the following equations:
\begin{align*}
V_{2,1} : 8x &= 16y + 1 \\
0 &= 128y^2 + 32y + 5
\end{align*}
and 
\begin{align*}
V_{2,2} : 3x &= 9y +1 \\
0 &= 135y^2 + 45y + 7.
\end{align*}
We then have the following two cycles, each consisting of two periodic vectors of type $(L,R,L,R)$ and two of type $(R,L,R,L)$, for $f$. These correspond to the components $V_{2,1}$ and $V_{2,2}$, respectively, with the first defined over the quadratic extension $\Q(\sqrt{-6})$ and the second defined over $\Q(\sqrt{-195})$. 
\[\begin{tikzcd}
	{\left(\frac{-1+\sqrt{-6}}{8},\frac{-2+\sqrt{-6}}{16} \right)} & {\left(\frac{-1+\sqrt{-6}}{8},\frac{-2-\sqrt{-6}}{16} \right)} \\
	{\left(\frac{-1-\sqrt{-6}}{8},\frac{-2+\sqrt{-6}}{16} \right)} & {\left(\frac{-1-\sqrt{-6}}{8},\frac{-2-\sqrt{-6}}{16} \right)}
	\arrow["{f_L}", from=1-1, to=2-1]
	\arrow["{f_R}", from=1-2, to=1-1]
	\arrow["{f_R}", from=2-1, to=2-2]
	\arrow["{f_L}", from=2-2, to=1-2]
\end{tikzcd}\]

\[\begin{tikzcd}
	{\left(\frac{-5+\sqrt{-195}}{30},\frac{-15+\sqrt{-195}}{90} \right)} & {\left(\frac{-5+\sqrt{-195}}{30},\frac{-15-\sqrt{-195}}{90} \right)} \\
	{\left(\frac{-5-\sqrt{-195}}{30},\frac{-15+\sqrt{-195}}{90} \right)} & {\left(\frac{-5-\sqrt{-195}}{30},\frac{-15-\sqrt{-195}}{90} \right)}
	\arrow["{f_L}", from=1-1, to=2-1]
	\arrow["{f_R}", from=1-2, to=1-1]
	\arrow["{f_R}", from=2-1, to=2-2]
	\arrow["{f_L}", from=2-2, to=1-2]
\end{tikzcd}\]

\end{example}

The other two factorization types \emph{cannot} occur. That is, there are no binary forms of the type studied admitting a periodic vector of type $(L,R,L,R)$ over $\Q$, as the following corollary of \cref{LRLR_thm} implies.

\begin{corollary}\label{LRLR_thm_rationals}
There does not exist a binary quadratic form $f(x,y) = Cx^2+Dy^2$ over $\Q$ with a periodic vector of type $(L,R,L,R)$ over $\Q$.
\end{corollary}
\begin{proof}
    For $y = 0$, \cref{LRLR_poly} becomes 
\[ 0 = d^3 - 2cd^2 + 5c^2d+c^3 ,\]
which as an affine equation in variables $c$ and $d$ over $\Q$ defines a singular conic whose only rational point is given by $c=d=0$. Thus, we are justified in setting $z = 1/y$ and scaling by $z^4$ to rewrite \cref{LRLR_poly} as 
\begin{align}\label{LRLR_poly_z}
    0 = z^4 \cdot R(1/z) &= 16c^2d^3(d-c)^2 + 8cd^2(d+c)(d-c)^2 z \\
    & \quad + (d^4+6cd^3+12c^2d^2-2c^3d-c^4)(d-c)z^2 \nonumber \\ 
    & \quad + (d^3+7cd^2-3c^2d-c^3)(d-c)z^3 + (d^3-2cd^2+5c^2d+c^3)z^4. \nonumber
\end{align}
Taken as an equation in variables $c, d,$ and $z$ over $\Q$, \cref{LRLR_poly_z} is a homogenous polynomial of degree $7$ defining a singular projective curve $X$ in $\mathbb{P}^2_\Q$ of geometric genus $2$ with exactly $4$ singular points:
\[ (c:d:z) \in \{(0 : 0 : 1), (0 : 1 : 0), (1 : 1 : 0), (1 : 0 : 0)\}. \]
One can compute in Magma, using the \texttt{IsHyperelliptic} command, an explicit birational map $B : X \to Y$ where $Y$ is the hyperelliptic curve over $\Q$ given by the affine equation in variables $v$ and $w$
\[ v^2 + (w^3 + w^2 + w + 1) v = -w^6 - w^3 - w - 1. \]
This map $B$ is defined away from the singular points of $X$. One can compute in Magma, or find $Y$ as the genus $2$ curve with label $6400.\text{g}.64000.1$ in the LMFDB \cite{LMFDB} and see, that 
\[ Y(\Q) = \{(-1 : 0 : 1), (1 : -2 : 1)\}. \]
We compute in the file \texttt{LRLR.m} that there are no rational points in the fibers above these two points under $B$. Therefore, $X(\Q)$ consists only of the $4$ singular points listed above, none of which provide a binary quadratic form with a periodic vector of type $(L,R,L,R)$. 
\end{proof}

\subsection{Period 5}\label{LLRLR_section}
By parts (1) and (2)(a) of \cref{univariate_prop}, we know that there does not exist a quadratic form $f(x,y) = cx^2+dy^2$ over $\Q$ with a rational periodic vector of type $(L,L,L,L,L),$ $(L,L,L,L,R),$ or $(L,L,L,R,R)$. The only remaining period $5$ type class to consider is then that of the non-univariate type $\mathbf{t} = (L,L,R,L,R)$. 

Analysis for this type proceeds similarly to that for the non-univariate type $(L,R,L,R)$. Each of the two polynomials 
\begin{align*} 
\Phi_{(L,L,R,L,R),L}(x,y) &= \pi_L(f_{(L,L,R,L,R)}(x,y))-x \quad \textnormal{ and } \\
\Phi_{(L,L,R,L,R),R}(x,y) &= \pi_R(f_{(L,L,R,L,R)}(x,y))-y
\end{align*}
is irreducible over $\mathbb{Q}(c,d)$, and so the variety 
\[ Y_{(L,L,R,L,R)} = Y_{(L,L,R,L,R),L} \cap Y_{(L,L,R,L,R),R} \]
is zero dimensional. Using Magma, we compute its irreducible components to be
\[ Y_{(L,L,R,L,R)}  = P_0 \cup P_{LR} \cup W, \]
where $P_0 = (0,0)$ and $P_{LR} = (\frac{1}{c+d},\frac{1}{c+d})$ correspond to the trivial vectors of period $1$ and $W$ is the zero-dimensional variety over $\Q(c,d)$ defined by the two equations
\begin{align}\label{LLRLR_poly}
0 = S(y) &:= 256c^7d^6(c+d)^2(c^2 - 3cd + 4d^2)y^{10} + 256c^7d^6(c+d)a_9y^9 \\
& \quad + 32c^5d^3(c+d)a_8 y^8 + 16c^5d^3a_7y^7 + \sum_{i=0}^{6} a_i y^i\nonumber
\end{align}
and
\[ x = H(y), \]
where $H(y) \in \Q(c,d)[y]$ is an explicit polynomial in $y$ of degree $9$, and the $a_i \in \Z[c,d]$ are as follows
\begin{align*}
    a_0 &:= c^7 + 7c^6d + 21c^5d^2 + 37c^4d^3 - 7c^3d^4 + 91c^2d^5 - 6cd^6 + d^7, \\
    a_1 &:= c^8 - 25c^6d^2 - 59c^5d^3 - 76c^4d^4 + 190c^3d^5 + 56c^2d^6 + 10cd^7
    + d^8, \\
    a_2 &:= c^9 + c^8d + 7c^7d^2 + 104c^6d^3 + 55c^5d^4 + 12c^4d^5 + 38c^3d^6 +
    96c^2d^7 + 9cd^8 + d^9, \\
    a_3 &:= c(c^9 + 2c^8d + 8c^7d^2 + 30c^6d^3 - 192c^5d^4 +
    210c^4d^5 - 112c^3d^6 + 40c^2d^7 - 16cd^8 + 16d^9), \\
    a_4 &:= c^2(c^9 + c^8d - 14c^7d^2 - 30c^6d^3 - 146c^5d^4 +
    208c^4d^5 + 234c^3d^6 - 128c^2d^7 - 136cd^8 +
    32d^9), \\
    a_5 &:= c^3(c^9 + 2c^8d + 9c^7d^2 + 112c^6d^3 + 56c^5d^4 +
    160c^4d^5 - 32c^3d^6 - 496c^2d^7 + 48cd^8 - 96d^9), \\
    a_6 &:= c^3(c^10 + 3c^9d + 11c^8d^2 + 41c^7d^3 - 152c^6d^4 +
    264c^5d^5 + 144c^4d^6 + 240c^3d^7 - 64c^2d^8 \\
    & \quad - 288cd^9 + 64d^10), \\
    a_7 &:= c^6 - 7c^4d^2 + 54c^3d^3 + 36c^2d^4 + 8cd^5 - 32d^6, \\
    a_8 &:= c^6 + 5c^4d^2 - 14c^3d^3 + 28c^2d^4 + 40cd^5 -
    16d^6, \\
    a_9 &:= -2cd+6d^2. \\
\end{align*}
These computations are handled in the file \texttt{LLRLR.m}. 

As one may expect, these equations are a bit more intricate than those from the previous type we considered. Nonetheless, we have our analogue of \cref{LRLR_thm} for type $(L,L,R,L,R)$. 

\begin{theorem}\label{LLRLR_thm}
    A binary quadratic form $f(x,y) = Cx^2+Dy^2$ over $\Q$ has at most $10$ periodic vectors of type $(L,L,R,L,R)$ over $\C$. Any such vector corresponds to a root of the specialization $S_{C,D}(y)$ of $S(y)$ to $(c,d) = (C,D)$, and there is a one-to-one correspondence between roots of $S_{C,D}(y)$ and periodic vectors of this type if $C \ne D$. 

    The coordinates of a periodic vector of type $(L,L,R,L,R)$ for $f$ generate a number field of degree at most $10$, with degree $10$ being the generic behavior as one ranges over such $f$, and if $f$ has a periodic vector of this type over a field $K$ then $S_{C,D}(y)$ must have a root in $K$. 
\end{theorem}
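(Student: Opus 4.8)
The plan is to mirror the structure of the proof of \cref{LRLR_thm}, leaning on the structural results established earlier in \cref{modular_polynomials_section} together with the explicit Magma computations recorded in the file \texttt{LLRLR.m}. First I would recall that the two modular polynomials $\Phi_{(L,L,R,L,R),L}$ and $\Phi_{(L,L,R,L,R),R}$ are irreducible over $\Q(c,d)$ (a fact asserted just before the theorem and verifiable by computer), so that their common vanishing locus $Y_{(L,L,R,L,R)}$ is zero-dimensional over $K = \Q(c,d)$. The decomposition into irreducible components $Y_{(L,L,R,L,R)} = P_0 \cup P_{LR} \cup W$ — with $P_0 = (0,0)$ and $P_{LR} = (\tfrac{1}{c+d},\tfrac{1}{c+d})$ the two trivial period-$1$ points, each of type $L$ \emph{and} type $R$ — is again a Magma computation; I would cite it as such. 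The heart of the argument is then the observation that $W$ is cut out by the two displayed equations, namely $S(y) = 0$ together with $x = H(y)$, so that eliminating $x$ identifies the $\overline{K}$-points of $W$ with the roots of $S(y) \in \Z[c,d][y]$, a polynomial whose degree is $10$ except when the leading coefficient $256c^7d^6(c+d)^2(c^2-3cd+4d^2)$ vanishes, i.e. when $C = -D$ or $C^2 - 3CD + 4D^2 = 0$ (note $C, D \neq 0$ is standing).

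Next I would pass to a specialization $(c,d) = (C,D)$ with $C, D \in \Q^\times$. Each component of $Y_{(L,L,R,L,R)}$ specializes, and all but finitely many common solutions of $\Phi_{(L,L,R,L,R),L}$ and $\Phi_{(L,L,R,L,R),R}$ correspond to genuine periodic vectors of type $(L,L,R,L,R)$, by the lemma on specializations in \cref{modular_polynomials_section}. Since $P_0$ and $P_{LR}$ are the two subtype-contaminated points and the theorem only claims an \emph{injection} from periodic vectors into roots of $S_{C,D}(y)$ in general, I would argue: any periodic vector $(a,b)$ of type $(L,L,R,L,R)$ for $f = Cx^2 + Dy^2$ satisfies \cref{dyn_poly_eqn} for this type, hence lies on $Y_{(L,L,R,L,R)}$; being of type \emph{exactly} $(L,L,R,L,R)$ it is not one of $P_0, P_{LR}$ (those have period $1$), so it lies on $W$, and therefore $b$ is a root of $S_{C,D}(y)$ and $a = H_{C,D}(b)$. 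This gives the map to roots and its injectivity (since the first coordinate is determined by the second via $H_{C,D}$). The bound of $10$ periodic vectors over $\C$ is then immediate from $\deg S \le 10$; the number-field degree bound follows because the coordinates of such a vector lie in $\Q(b) \subseteq \Q[y]/(S_{C,D}(y))$, which has degree at most $10$ over $\Q$, with the generic degree being exactly $10$ by Hilbert irreducibility applied to the irreducible $S(y)$ over $\Q(c,d)$ (outside a thin set of $(C,D)$). In particular a rational periodic vector of this type forces a rational root of $S_{C,D}(y)$.

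For the sharper one-to-one correspondence when $C \neq D$, the point is to rule out the possibility that a root $b$ of $S_{C,D}(y)$ yields a vector $(H_{C,D}(b), b)$ on $Y_{(L,L,R,L,R)}$ that is periodic of a \emph{proper} subtype rather than of full type $(L,L,R,L,R)$. Exactly as in the proof of \cref{LRLR_thm}, a vector simultaneously of type involving both $L$ and $R$ replacements that collapses to a shorter cycle would force the left and right replacement maps to agree, which requires $C = D$; under the standing hypothesis $C \neq D$ this does not happen, so every root of $S_{C,D}(y)$ corresponds to a vector of type exactly $(L,L,R,L,R)$, and the map $b \mapsto (H_{C,D}(b), b)$ is the claimed bijection. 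I would also remark (again from \cref{LLRLR_poly}) that when $C = D$ one gets no new solutions of this type, consistent with the theorem statement.

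The main obstacle, as in the period-$4$ case, is not conceptual but computational bookkeeping: verifying the component decomposition $Y_{(L,L,R,L,R)} = P_0 \cup P_{LR} \cup W$ and the precise form of $S(y)$ and $H(y)$ — in particular confirming the leading coefficient and hence the list of degenerate $(C,D)$, namely $C = -D$ and $C^2 - 3CD + 4D^2 = 0$ — requires a careful Gr\"obner-basis elimination in $\Q(c,d)[x,y]$ that I would delegate to Magma and reference via \texttt{LLRLR.m}. The one genuinely non-routine point to be careful about is the subtype/collapse analysis for this length-$5$ type: because $5$ is prime, the only possible smaller periods dividing the relevant data are $1$, so the argument that full type $(L,L,R,L,R)$ is forced (away from $C = D$) is clean, but one should check that $W$ itself contains no points of period strictly less than $5$ masquerading under this cyclic word — which again follows from $P_0$ and $P_{LR}$ being split off as separate components.
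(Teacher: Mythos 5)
Your proposal is correct and follows essentially the same route as the paper: both rely on the Magma-computed decomposition $Y_{(L,L,R,L,R)} = P_0 \cup P_{LR} \cup W$, the correspondence $y \mapsto (H_{C,D}(y),y)$ between roots of $S_{C,D}$ and periodic vectors, the observation that a collapse to lower period forces $C = D$, and Hilbert irreducibility for the generic degree $10$. The only quibble is your closing remark that $C=D$ gives ``no new solutions'': the paper's example $f = x^2+y^2$ shows $S_{C,C}$ can still have roots, some of which correspond to vectors of strictly smaller period, which is precisely why the bijection is only asserted for $C \ne D$.
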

\begin{proof}
    The proof is, from the computations above, as for \cref{LRLR_thm}. Note in particular that a root $Y$ of $S_{C,D}(y)$ over a number field $F$ gives by evaluation of the specialization $H_{C,D}$ of $H$ at $Y$ a unique corresponding $X \in F$ so that $(X,Y)$ is periodic for $f$ of type $(L,L,R,L,R)$ as long as $C \ne D$ (in which case the type period could be lower). The claimed generic behavior is determined by an application of Hilbert's Irreducibility Theorem, noting that $S_{C,D}(y)$ has degree $10$ as long as $C \ne -D$ and $c^2-3cd+4d^2 \ne 0$. 
\end{proof}
    
\begin{example}
    To show that the roots of $S(y)$ in general truly do not provide periodic vectors of type $(L,L,R,L,R)$ when $C=D$, we consider an example in which $S(y)$ has factorization type $(8,2)$ over $\Q$ and a root of $S(y)$ corresponds to a periodic vector of lower period. Letting $f(x,y) = x^2+y^2$, we have
    \[ Y_{f,(L,L,R,L,R)} = P_0 \cup P_{LR} \cup W_2 \cup W_8, \]
    where $W_2$ and $W_8$ are the zero-dimensional varieties over $\Q$ given by 
    \begin{align*} W_2 : \quad &x = y, \\
    &2y^2 + y + 1 = 0
    \end{align*}
    and 
    \begin{align*} W_8 : \quad &x = w(y), \\
    &1024y^8 + 512y^7 + 640y^6 - 96y^5 - 140y^4 + 81y^2 - 47y + 145 = 0 \end{align*}
    for a degree $7$ polynomial $w(y) \in \Z[y]$. 

    From $W_2$, we identify a cycle of periodic vectors over the quadratic number field $\Q(\sqrt{-7}):$

    \[\begin{tikzcd}
	{\left( \dfrac{-1 + \sqrt{-7}}{4} , \dfrac{-1 + \sqrt{-7}}{4} \right)} & {\left( \dfrac{-3 - \sqrt{-7}}{4} , \dfrac{-1 + \sqrt{-7}}{4} \right)} \\
	{\left( \dfrac{-1 + \sqrt{-7}}{4} , \dfrac{-3 - \sqrt{-7}}{4} \right)}
	\arrow["{f_L}", shift left=2, from=1-1, to=1-2]
        \arrow["{f_R}"', from=1-2, to=1-2, loop, in=300, out=240, distance=5mm]
	\arrow["{f_R}"', from=1-1, to=2-1]
	\arrow["{f_L}", shift left, from=1-2, to=1-1]
	\arrow["{f_R}"', shift right=5, from=2-1, to=1-1]
	\arrow["{f_L}"', from=2-1, to=2-1, loop, in=300, out=240, distance=5mm]
\end{tikzcd}\]

    Note that one of these vectors is periodic both of type $(L,L)$ and of type $(R,R)$ while the other two are periodic of type $(L)$ and periodic of type $(R)$ individually. Hence, no element of the orbit is of type $(L,L,R,L,R)$. 
\end{example}

It would be interesting if one could determine whether there are any specializations of $S_{C,D}$ of the polynomial $S$ admitting a rational root, or more generally admitting a different factorization type over $\Q$. One can in theory run the same argument as in proof of \cref{LRLR_thm_rationals} to try to determine the existence of rational periodic vectors of this type; there are no such vectors with $y = 0$, and so such a vector would correspond to a non-singular point on the projective curve $X$ over $\Q$ defined by the following homogenous equation in variables $c, d,$ and $z = 1/y$ (with $S(y)$ as in \cref{LLRLR_poly}):
\[ X : z^{10}S(1/y) = 0.  \]
One then would like to determine whether this curve $X$, which has geometric genus $11$ and is not birational to a hyperelliptic curve, has rational points other than its $4$ singular points. Computing a birational model for $X$ to investigate its rational points, or otherwise determining the rational points of $X$, seems like a difficult task given the large genus of $X$ and the intricacy of its defining equation. We explicitly compute in Magma (see the file \texttt{LLRLR.m}) that $X$ has no smooth rational points of height up to $10000000$, though, and for this reason we feel content in making the following conjecture. 

\begin{Conjecture}\label{LLRLR_conjecture}
There exist no distinct, nonzero rational numbers $C, D$ such that the binary quadratic form $f(x,y) = Cx^2+Dy^2$ has a rational periodic vector of type $(L,L,R,L,R)$ (equivalently, such that the polynomial $S_{C,D}(y)$ has a rational root). 
\end{Conjecture}

\bibliographystyle{amsalpha}
\bibliography{biblio}
\end{document}